\documentclass[11pt]{amsart}

\pdfoutput=1

\usepackage[T1]{fontenc}
\usepackage{lmodern}

\usepackage{esint,amssymb} 
\usepackage{graphicx}
\usepackage{MnSymbol}
\usepackage{mathtools} 
\usepackage[colorlinks=true, pdfstartview=FitV, linkcolor=blue, citecolor=blue, urlcolor=blue,pagebackref=false]{hyperref}
\usepackage{orcidlink}
\usepackage{microtype}

\usepackage{bm}
\usepackage{dsfont}
\usepackage{mathrsfs}
\usepackage{xcolor}

\usepackage{caption}

\newtheorem{proposition}{Proposition}
\newtheorem{theorem}[proposition]{Theorem}

\newtheorem{corollary}[proposition]{Corollary}

\theoremstyle{remark}
\newtheorem{remark}[proposition]{Remark}

\theoremstyle{definition}

\numberwithin{equation}{section}
\numberwithin{proposition}{section}

\newcommand{\R}{\mathbb{R}}

\renewcommand{\le}{\leqslant}
\renewcommand{\ge}{\geqslant}
\renewcommand{\leq}{\leqslant}
\renewcommand{\geq}{\geqslant}

\renewcommand{\subset}{\subseteq}

\newcommand{\Ll}{\left}
\newcommand{\Rr}{\right}
\renewcommand{\d}{\mathrm{d}}
\newcommand{\dr}{\partial}

\newcommand{\mcl}{\mathcal}
\newcommand{\msf}{\mathsf}

\newcommand{\al}{\alpha}
\newcommand{\be}{\beta}
\newcommand{\ga}{\gamma}

\newcommand{\si}{\sigma}

\renewcommand{\v}{\mathbf{v}}
\renewcommand{\phi}{\varphi}

\begin{document}

\author[Jean-Christophe Mourrat]{Jean-Christophe Mourrat\,\orcidlink{0000-0002-2980-725X}}
\address[Jean-Christophe Mourrat]{Department of Mathematics, ENS Lyon and CNRS, Lyon, France}

\keywords{}
\subjclass[2010]{}
\date{}

\title[PL conditions do not guarantee convergence of GDA]{PL conditions do not guarantee convergence of gradient descent-ascent dynamics}

\begin{abstract}
We give an example of a function satisfying a two-sided Polyak-\L{}ojasiewicz (PL) condition for which a gradient descent-ascent flow line fails to converge to the saddle point, circling around it instead. We can even impose the function to be strongly convex in one variable and to satisfy a PL condition in the other variable.
\end{abstract}

\maketitle

\begin{figure}[h]
  \centering
  \includegraphics[width=\linewidth]{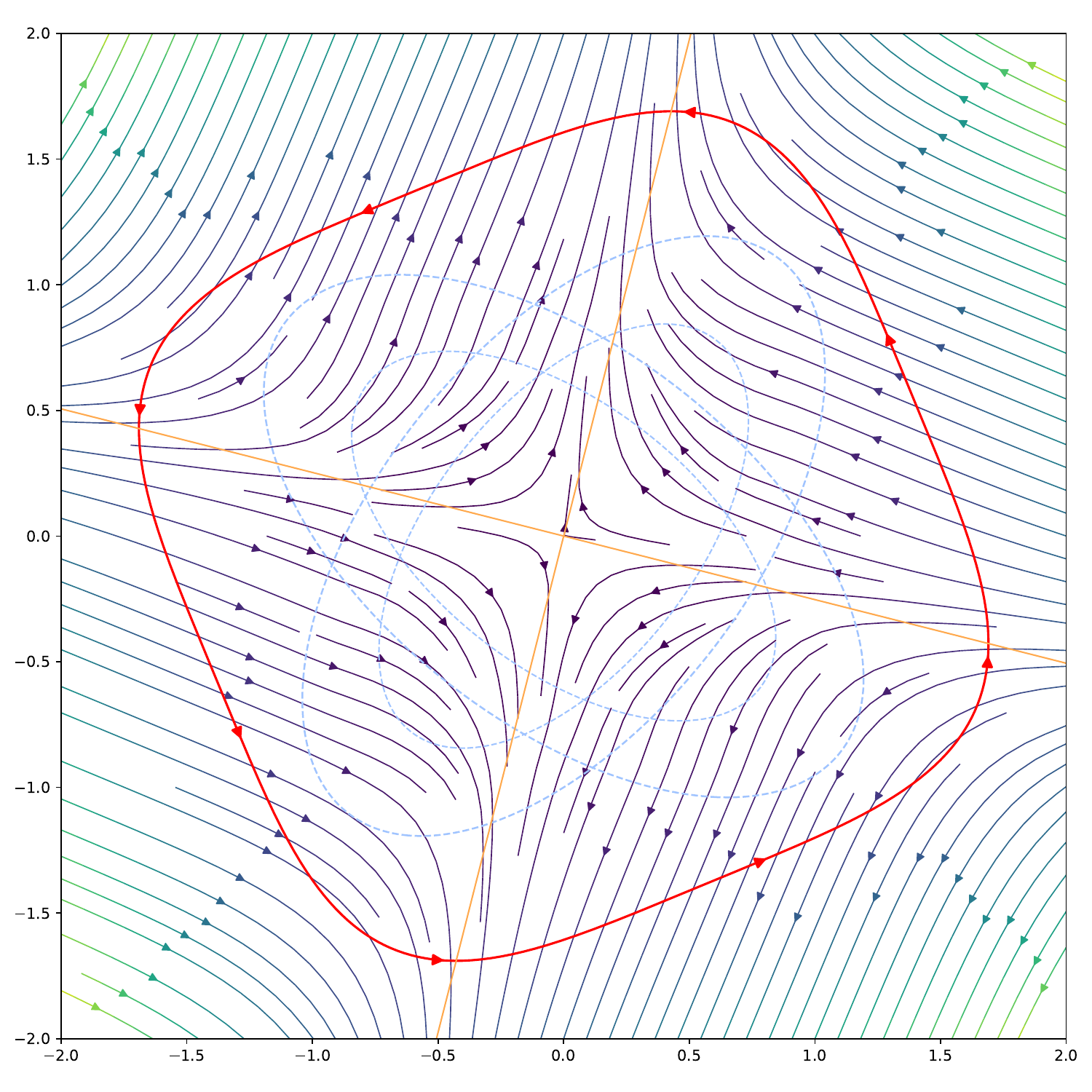}
  \captionsetup{width=0.95\linewidth}
  \caption{{\small The flow lines with a color scale from dark blue to yellow are the level lines of the function $f$ we build for Theorem~\ref{t.main} (the color scale indicates the magnitude of $\v$). The value of $f$ is not shown and is prescribed along the two orange lines according to \eqref{e.f.on.mclX}. These two lines are also the set of points at which the level line of $f$ is horizontal or vertical (i.e.,\ where $\dr_x f = 0$ or $\dr_y f = 0$). The red trajectory is a gradient descent-ascent flow line on $f$. The ellipses are the level lines of the quadratic forms inside the two occurrences of the function $\phi$ in \eqref{e.def.v}, for the values $1/2$ and $1$. In particular, the level lines of $f$ are tangent to the vector field given in \eqref{e.def.w} on the region contained by every ellipse, and to the vector field given in \eqref{e.v.outside} on the region that is outside of every ellipse. 
}}
  \label{f.vector_field}
\end{figure}

%
%
%
%
%
%

\section{Introduction}

A differentiable function $f : \R^d \to \R$ that is bounded from below is said to satisfy a Polyak-\L{}ojasiewicz (PL) condition \cite{lojasiewicz1963topological, polyak1963gradient} if there exists $C < +\infty$ such that for all $z \in \R^d$,
\begin{equation}  \label{e.gen.PL}
f(z) - \inf f \le C |\nabla f(z)|^2.
\end{equation}
Under this condition, all gradient descent trajectories converge to a minimizer of $f$ at linear speed \cite{karimi2016linear}. 
A natural question is whether analogous conditions can guarantee convergence for saddle-point problems. 
Min-max optimization problems of the form
\begin{equation*}
\min_{x \in \mcl X} \max_{y \in \mcl Y} f(x,y)
\end{equation*}
arise in a variety of contexts, including robust optimization \cite{ben2009robust} and generative adversarial networks \cite{goodfellow2014generative}. For definiteness, we set $\mcl X$ and $\mcl Y$ to be Euclidean spaces. A natural procedure for finding a saddle point is to iteratively take small steps in the direction of $(-\nabla_x f, \nabla_y f)$. In the regime of infinitesimally small steps, this amounts to studying the gradient descent-ascent (GDA) flow $(z(t))_{t \ge 0}$ solving
\begin{equation}
\label{e.def.grad.asc.desc}
\dr_t z(t) = 
\begin{pmatrix}
-\nabla_x f \\
\nabla_y f
\end{pmatrix}
(z(t)).
\end{equation}
If the functions $(f(\cdot,y))_{y \in \mcl Y}$ are uniformly strongly convex, and the functions $(f(x,\cdot))_{x \in \mcl X}$ are uniformly strongly concave, then one can show that the GDA flow converges to a saddle point of $f$ \cite{demyanov1972numerical}. If $f$ is only convex-concave (without the ``strongly'' qualifier), then there are counter-examples to convergence of the GDA flow, such as with the function $f : (x,y) \mapsto xy$ for which the flow circles around the origin. Yet under the sole convexity-concavity condition, other first-order algorithms such as the extragradient method do succeed in finding a saddle point for $f$ (assuming that one exists) \cite{korpelevich1976extragradient}. 

There has been significant effort to weaken the strong convexity-concavity assumption, for instance by imposing strong convexity in only one variable, or by replacing strong convexity-concavity with PL conditions \cite{doan2022convergence, lin2020gradient, lu2020hybrid, nouiehed2019solving, yang2020global}. We say that $f$ satisfies a \emph{two-sided PL condition} if the functions $(f(\cdot,y))_{y \in \mcl Y}$ and $(-f(x,\cdot))_{x \in \mcl X}$ satisfy a PL condition with a uniform constant. Under this condition, modified versions of the GDA flow were shown to converge to a saddle point in~\cite{doan2022convergence, yang2020global}. The modifications crucially impose a sufficiently large separation of timescales between the evolutions of the variables $x$ and $y$; see also \cite{lin2020gradient, lu2020hybrid, nouiehed2019solving} for related two-timescale approaches. Another positive result under the two-sided PL condition is that the GDA flow~\eqref{e.def.grad.asc.desc} itself converges to a saddle point if initialized sufficiently close to it (see Proposition~\ref{p.loc.conv} below). 

In view of these results, one may expect that the two-sided PL condition in fact guarantees global convergence of the GDA flow to a saddle point (assuming one exists). The point of this paper is to show that this is not so.

\begin{theorem}
\label{t.main}
There exists a $\mcl C^\infty$ function $f : [-1,1]^2 \to \R$ with a unique critical point at the origin, and a constant $C < +\infty$ such that for every $x, y \in [-1,1]^2$,
\begin{equation}  
\label{e.2PL}
\begin{cases}
\displaystyle{f(x,y) - \inf_{x' \in [-1,1]} f(x',y) \le C |\dr_x f(x,y)|^2},\\
\displaystyle{\sup_{y' \in [-1,1]} f(x,y') - f(x,y) \le C |\dr_y f(x,y)|^2},
\end{cases}
\end{equation}
and yet, for every $z(0)$ in some open subset of $[-1,1]^2$, the GDA flow given by \eqref{e.def.grad.asc.desc} is periodic.
\end{theorem}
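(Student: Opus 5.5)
The plan is to build $f$ from its level lines rather than from a formula. In two dimensions the GDA field $(-\dr_x f, \dr_y f)$ is not a gradient field, and the key observation is that a GDA trajectory runs along a level line of $f$ exactly when $\dr_x f\,(-\dr_x f) + \dr_y f\,\dr_y f = 0$, i.e.\ when $|\dr_x f| = |\dr_y f|$. So I will look for an $f$ whose level curves, at least in an annulus around the origin, are closed loops on which $|\dr_x f| = |\dr_y f|$; along such a loop the GDA flow has speed bounded below and therefore travels around it periodically.

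Concretely, I will prescribe a smooth vector field $\v$ on $[-1,1]^2$, vanishing only at the origin, whose integral curves are meant to be the level lines of $f$, and then set $f$ so that $\nabla f \perp \v$. Near the origin $f$ should look like a saddle with the two-sided PL property, for instance $x^2 - y^2 + \lambda xy$, and the level lines there are given by the field \eqref{e.def.w}. Far from the origin the level lines should be tangent to a field \eqref{e.v.outside} whose orbits wind around the origin, and which I will choose among the directions $(1,\pm1)$ rotated by $\pm\pi/2$, because $\nabla f \parallel (1,\pm 1)$ forces $|\dr_x f| = |\dr_y f|$. The two fields are glued with cutoffs $\phi$ of quadratic forms, which produces \eqref{e.def.v}. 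The values of $f$ along the two curves $\mcl X$ where $\dr_x f = 0$ or $\dr_y f = 0$ are prescribed by \eqref{e.f.on.mclX}, namely increasing in $|y|$ on the curve $\{\dr_x f = 0\}$ and decreasing in $|x|$ on the curve $\{\dr_y f = 0\}$, and $f$ is then extended as a first integral of $\v$. Each level line has to cross those curves transversally, which is what makes $f$ well defined and $\mcl C^\infty$, with $\nabla f \neq 0$ away from the origin.

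To verify \eqref{e.2PL}, fix $y$ and look at $x \mapsto f(x,y)$. By construction it has a unique critical point, located at the crossing with $\{\dr_x f = 0\}$, and it is monotone on either side. Near that point $\dr_x^2 f > 0$ uniformly, because $\mcl X$ is crossed transversally and $f$ is prescribed there with nondegenerate growth. Away from that point $|\dr_x f|$ is bounded below by compactness. These two facts give $f - \min f \le C|\dr_x f|^2$, and the argument in $y$ is symmetric. Near the origin the conclusion follows directly from the quadratic model.

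The last ingredient is to exhibit an open set of periodic orbits. In the outer annulus every level line is a closed loop on which $|\dr_x f| = |\dr_y f|$, so $f$ is conserved along GDA there. An initial point on such a loop then stays on it and goes around it in finite time, and because the level sets in the annulus foliate an open region, this gives the open set of periodic initial conditions. I expect the main obstacle to be the compatibility between the inner and outer regimes. The level lines have to wind all the way around the origin while still crossing each line $y = \text{const}$ exactly once at a point where $\dr_x f = 0$, which is needed for the one-dimensional functions to be unimodal. They must also do this without creating new critical points in the gluing zone. I would first check the geometry on a picture such as Figure~\ref{f.vector_field}, and then prove transversality region by region.
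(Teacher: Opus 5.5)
There is a genuine gap in your mechanism for periodicity: the closed level loops you want cannot exist. On a loop where $|\dr_x f| = |\dr_y f|$ and $\nabla f \neq 0$, the unit normal $\nabla f/|\nabla f|$ takes values among the four directions $(\pm1,\pm1)/\sqrt2$. By continuity it is then constant, which is impossible for a closed curve. For the same reason, a nonvanishing field whose direction is confined to a discrete set has straight orbits, and these cannot wind around the origin.

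The obstruction is more basic still: $f$ cannot be constant on any closed curve. Such a curve bounds a disk on which $f$ would attain an interior extremum, hence $f$ would have a critical point that is a local extremum. But the only critical point is the origin, and it is a saddle; in your construction it is the nondegenerate saddle $x^2-y^2+\lambda xy$. Your prescription on $\mcl X$ also excludes the loops. A loop around the origin meets both lines of $\mcl X$, and the values prescribed there have opposite signs. Note also that you have swapped the two lines: $\{\dr_x f = 0\}$ carries the minima of $f(\cdot,y)$, which are at most $f(0,y) \le f(0,0)$, so $f$ must decrease in $|x|$ there.

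Consequently $f$ must genuinely oscillate along any periodic GDA orbit, and periodicity cannot come from conservation of $f$. In the paper, each nonconstant flow line of $\v$ (that is, each level line of $f$) is an open arc meeting $\mcl X$ at most once (Proposition~\ref{p.flow}). This is what makes $f$ well defined and the partial functions unimodal. For level lines of that type, your PL verification is sound and matches Propositions~\ref{p.crit.2PL} and~\ref{p.good.f}.

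The missing idea is a first integral of the GDA flow other than $f$. If the level lines of $f$ are the flow lines of $\v$, then $\nabla f \parallel (-\v_2,\v_1)$. So the GDA field $(-\dr_x f,\dr_y f)$ is parallel to $(\v_2,\v_1)$, and every $E$ with $\nabla E \parallel (-\v_1,\v_2)$ is conserved.

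The paper reverses the logic. It fixes $E(x,y) = 3x^4+4x^3y+6x^2y^2-4xy^3+3y^4$, whose level sets are $L^4$ spheres rotated by $\pi/8$. It then takes $\v = \frac16(-\dr_x E,\dr_y E)$ far from the origin; see \eqref{e.v.outside} and \eqref{e.plot.twist}. There, GDA orbits run along the closed level sets of $E$ while $f$ oscillates along them.

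Choosing $\ga$ as the root of $P$ makes this cubic field factor as $\v_1 = 2(\ga y - x)(x^2+axy+by^2)$ and $\v_2 = 2(y+\ga x)(y^2-axy+bx^2)$. Its components therefore vanish on the same two lines as those of the linear field \eqref{e.def.w}. This lets the two regimes be glued through $\phi$ in \eqref{e.def.v} without creating new points of horizontal or vertical tangency. It is the actual resolution of the inner/outer compatibility problem you flagged.
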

Building upon Theorem~\ref{t.main}, we can then construct a suitable modification of the function $f$ for which the same conclusion as in Theorem~\ref{t.main} holds, but with the assumption of the two-sided PL condition strengthened into a strongly-convex/PL condition.
\begin{corollary}
\label{c.convPL}
There exists a $\mcl C^\infty$ function $F : [-1,1]^2 \to \R$ with a unique critical point at the origin, and a constant $C < +\infty$ such that for every $x, y \in [-1,1]^2$,
\begin{equation}  
\label{e.convPL}
\begin{cases}
\displaystyle{\partial_x^2 F(x,y) \ge C^{-1}},\\
\displaystyle{\sup_{y' \in [-1,1]} F(x,y') - F(x,y) \le C |\dr_y F(x,y)|^2},
\end{cases}
\end{equation}
and yet, for every $z(0)$ in some open subset of $[-1,1]^2$, the GDA flow given by \eqref{e.def.grad.asc.desc} (with $f$ replaced by $F$) is periodic.
\end{corollary}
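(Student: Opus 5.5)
We build $F$ from the function $f$ of Theorem~\ref{t.main} by a change of variables in $x$ that is tied to $y$, chosen so that the GDA flow of $F$ is conjugate to that of $f$, and then compose with a convex function in the $x$-direction to make $F$ strongly convex in $x$.

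Because $f$ satisfies the one-variable PL condition in $x$ and has a unique critical point, for each fixed $y$ the function $f(\cdot,y)$ has no critical point other than its minimizer. It should therefore be monotone on each side of a unique minimizer $m(y)$, so it is unimodal. Such a function need not be convex. We make it convex by post-composing with an increasing function that stretches the values: set
\[
F(x,y) = G\bigl(f(x,y)\bigr)
\]
with $G$ smooth and increasing.

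Post-composition interacts with GDA in a simple way. We have $\nabla F = G'(f)\nabla f$, so the GDA vector field of $F$ is the GDA field of $f$ multiplied by the positive scalar $G'(f)$. Flow lines are therefore the same curves, only reparametrized in time. Two consequences follow.

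First, periodic orbits of $f$ stay periodic orbits of $F$. Such an orbit is a closed curve that avoids the critical point, and the speed factor $G'(f)$ is bounded away from $0$ on it, so the reparametrized flow still traverses the whole curve in finite time.

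Second, critical points are preserved, provided $G' > 0$ everywhere.

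The $y$-condition is also preserved. Take $G$ convex and increasing with $G' \ge c > 0$ on the range of $f$. Then
\[
\sup_{y'} F(x,y') - F(x,y) \le \Bigl(\sup G'\Bigr)\Bigl(\sup_{y'} f(x,y') - f(x,y)\Bigr) \le C\bigl(\sup G'\bigr)|\dr_y f|^2 \le C\,\frac{\sup G'}{c^2}\,|\dr_y F|^2.
\]

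The remaining task is to get $\dr_x^2 F \ge C^{-1}$. We compute
\[
\dr_x^2 F = G''(f)\,(\dr_x f)^2 + G'(f)\,\dr_x^2 f.
\]
Near the curve $\{\dr_x f = 0\}$, where $x = m(y)$, the first term vanishes. So we need $\dr_x^2 f > 0$ there, i.e.\ a nondegenerate minimum in $x$.

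This is the main obstacle: Theorem~\ref{t.main} alone does not guarantee it. We would have to go back into the construction of $f$ and check that it does hold. The orange lines in the figure are where $\dr_x f = 0$, and $f$ is explicitly prescribed near them by \eqref{e.f.on.mclX}, so we would check nondegeneracy directly from that formula. If it fails, we would modify $f$ in a thin neighbourhood of that curve so that the $x$-minimum becomes quadratic, while keeping the level lines transversal and the periodic orbit intact.

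Once nondegeneracy holds, we argue by compactness on $[-1,1]^2$:
\begin{itemize}
\item in a neighbourhood $U$ of the curve $\{\dr_x f = 0\}$, we have $\dr_x^2 f \ge \delta > 0$;
\item outside $U$, we have $|\dr_x f| \ge \eta > 0$, while $|\dr_x^2 f| \le M$.
\end{itemize}
Take $G$ with $G' \in [1, K]$ and $G''$ large, for instance $G(s) = s + \lambda e^{\mu s}$ with suitable constants $\lambda$ and $\mu$. The requirement outside $U$ is
\[
G''\eta^2 \ge K M + 1.
\]
The ratio $G''/G' \approx \mu$ can be made as large as we like while $G'/1$ stays bounded on the bounded range of $f$. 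This is because $K$ grows like $\lambda\mu e^{\mu\max f}$ while $G''$ grows like $\lambda\mu^2 e^{\mu\min f}$. If the range of $f$ is too wide for this to close, we first shrink the range by rescaling the domain, or we use an $x$-dependent adjustment instead.

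Inside $U$, the bound $\dr_x^2 F \ge G'\delta \ge \delta$ holds since $G'' \ge 0$. Combining the two regions gives $\dr_x^2 F \ge C^{-1}$ on all of $[-1,1]^2$.
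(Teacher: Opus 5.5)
Your strategy matches the paper's. You set $F = G\circ f$ with $G$ increasing and $G''/G'$ large, observe that the GDA field is only multiplied by $G'(f)>0$, and obtain $\partial_x^2 F = G''(f)(\partial_x f)^2 + G'(f)\partial_x^2 f \ge C^{-1}$ by splitting into a neighbourhood $U$ of $\{\partial_x f=0\}$ and its complement. The paper takes $G(s)=\beta^{-1}e^{\beta s}$. Your mean-value check of the $y$-inequality is a valid, more direct substitute for the paper's appeal to Proposition~\ref{p.crit.2PL}. As written, however, two steps are left open, and both close with material already available.

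\textbf{The nondegeneracy is not an obstacle.} The condition $\partial_x^2 f>0$ on $\{\partial_x f=0\}$ needs no modification of $f$.
\begin{itemize}
\item By the direct implication of Proposition~\ref{p.simple}, the uniform PL condition in $x$ already forces $\partial_x^2 f>0$ at every zero of $\partial_x f$ on any non-constant slice $f(\cdot,y)$.
\item For the constructed $f$, this is proved explicitly at the end of the proof of Proposition~\ref{p.good.f}, by differentiating $\nabla f\cdot \v=0$ in $x$ on the line $\{y=-\gamma x\}$.
\item That line is exactly $\{\partial_x f=0\}$ and meets each horizontal line once, so no slice is constant.
\end{itemize}
You should cite this rather than leave the step conditional with a ``modify $f$ if it fails'' fallback.

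\textbf{The closing estimate should be pointwise.} You compare $K=\sup G'$ with the infimum of $G''$ over the range of $f$. This loses a factor of order $e^{\mu(\max f-\min f)}$ and need not close, which is why you reach for vague fallbacks; none are needed. Outside $U$,
\[
\partial_x^2 F \ge G''(f)\,\eta^2 - G'(f)\,M = G'(f)\Bigl(\frac{G''(f)}{G'(f)}\,\eta^2 - M\Bigr),
\]
so it suffices to have $G'\ge 1$ and $G''/G' \ge (M+1)/\eta^2$ on the range of $f$. Your own $G(s)=s+\lambda e^{\mu s}$ does this if you take:
\begin{itemize}
\item $\mu \ge 2(M+1)/\eta^2$;
\item $\lambda \ge e^{-\mu\min f}/\mu$, which guarantees $G''/G'\ge \mu/2$.
\end{itemize}
This is precisely your remark that $G''/G'$ can be made large. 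With the paper's choice one has $G''/G'=\beta$ identically. Then $\partial_x^2 F = e^{\beta f}\bigl(\partial_x^2 f + \beta(\partial_x f)^2\bigr)$, and the bound $\partial_x^2 F\ge \min(c,1)\inf e^{\beta f}$ follows in one line once $\beta\eta^2 \ge 1+M$.
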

It is immediate to verify that if a function $f$ satisfies \eqref{e.2PL} and admits a critical point, say at the origin (i.e.\ $\nabla f(0,0) = 0$), then this critical point is a saddle point, or more precisely, for every $x,y \in [-1,1]$, we have
\begin{equation*}  
f(0,y) \le f(0,0) \le f(x,0).
\end{equation*}
The function $f$ that we build to show Theorem~\ref{t.main} is displayed on Figure~\ref{f.vector_field} (up to a rescaling of the variables to bring them back to $[-1,1]^2$). For every $(x,y)$ in a neighborhood of the origin, this function is given by $f(x,y) = \frac \ga 2 x^2 + xy - \frac \ga 2 y^2$, for some $\ga \simeq 0.2531$. It is straightforward to check that this implies the convergence of the GDA flow towards the saddle point if one initializes the flow sufficiently close to the origin. This is not specific to this example, as we clarify in Proposition~\ref{p.loc.conv} below.

As we move away from the origin, we will progressively deform the function~$f$ so that the GDA flow then admits an integral of motion (i.e.\ a quantity that is preserved along the flow). This quantity is the $L^4$ norm after a rotation by $\pi/8$ (see also the red trajectory on Figure~\ref{f.vector_field}). 

In the statement of Theorem~\ref{t.main}, it is possible to replace the compact domain $[-1,1]^2$ by $\R^2$ if one so wishes. In order to do so, it indeed suffices to undo the said deformation, so that outside of a sufficiently large bounded region, the function $f$ becomes again the quadratic form that we set it to be near the origin. 

When the function $f$ is strongly convex-concave, the operator $V := (\nabla_x f, -\nabla_y f)$ is strongly monotone \cite{rockafellar1970monotone}, and in this case, the global
convergence of the GDA flow follows at once from the identity
$\frac 1 2 \dr_t |z - z^*|^2 = -(V(z) - V(z^*)) \cdot (z - z^*)$, where $z^*$ denotes the saddle point. In contrast, the two-sided PL condition does not imply that $V$ is monotone. The counter-example presented here is thus consistent with the understanding that this convergence is tied to the strong monotonicity of $V$. 

As announced, we also show for reference that the two-sided PL condition in~\eqref{e.2PL} does guarantee local convergence to a saddle point. For every $r > 0$, we denote by $B_r$ the open Euclidean ball of radius $r$ centered at the origin. 
\begin{proposition}[local convergence]
\label{p.loc.conv}
Let $f : [-1,1]^2 \to \R$ be a $\mcl C^2$ function that has a critical point at the origin and is such that neither $f(0,\cdot)$ nor $f(\cdot,0)$ is a constant function, and let $C < +\infty$ be such that \eqref{e.2PL} holds for every $x,y \in [-1,1]^2$. There exists $r > 0$ such that for every $z(0) \in B_r$, the GDA flow given by \eqref{e.def.grad.asc.desc} converges to the origin. 
\end{proposition}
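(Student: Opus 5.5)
We will show that the two-sided PL condition~\eqref{e.2PL} forces the Jacobian of the GDA vector field at the origin to be a stable matrix. Local convergence then follows from linearization. Write
\begin{equation*}
V(x,y) := (-\dr_x f(x,y), \dr_y f(x,y)),
\end{equation*}
and let $a := \dr_x^2 f(0,0)$, $b := \dr_x\dr_y f(0,0)$, $c := \dr_y^2 f(0,0)$. The Jacobian of $V$ at the origin is
\begin{equation*}
J = \begin{pmatrix} -a & -b \\ b & c \end{pmatrix},
\end{equation*}
with trace $c - a$ and determinant $b^2 - ac$. It suffices to prove $a > 0$ and $c < 0$. Then $\mathrm{tr}\, J < 0$ and $\det J \ge -ac > 0$, so both eigenvalues of $J$ have negative real part. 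The standard linearization theorem (Lyapunov's indirect method, applicable since $V$ is $\mcl C^1$) then gives $r > 0$ such that every flow line started in $B_r$ stays in $B_r$ and converges exponentially fast to the origin. If one wants an explicit argument, take a quadratic Lyapunov function $z \mapsto z \cdot P z$ with $J^T P + P J = -I$, whose derivative along the flow is $\le -|z|^2/2$ for $|z|$ small.

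\textbf{Step 1: $a \ge 0$ and $c \le 0$.} Apply the first line of~\eqref{e.2PL} at $y = 0$. As remarked after Corollary~\ref{c.convPL}, $0$ is a global minimizer of $f(\cdot,0)$ on $[-1,1]$, so $\inf_{x'} f(x',0) = f(0,0)$. Hence
\begin{equation*}
f(x,0) - f(0,0) \ge 0,
\end{equation*}
and Taylor expansion at $x = 0$ gives $a \ge 0$. The symmetric argument using the second line of~\eqref{e.2PL} at $x = 0$ gives $c \le 0$.

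\textbf{Step 2 (main obstacle): $a \neq 0$ and $c \neq 0$.} This is where the non-constancy hypothesis enters. Set $g(x) := f(x,0) - f(0,0)$, so that $g \ge 0$, $g(0) = 0$, $g'(0) = 0$, and the PL condition reads $g \le C |g'|^2$ on $[-1,1]$. Suppose for contradiction that $a = g''(0) = 0$. Then $g'(x) = o(|x|)$, so $|g'(x)| \le \varepsilon |x|$ for $|x| \le \delta$, with $\varepsilon$ chosen small. The PL inequality then gives $g(x) \le C\varepsilon^2 x^2$.

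To get a contradiction, run the usual PL argument on $h := \sqrt{g}$. Wherever $g > 0$,
\begin{equation*}
|h'| = \frac{|g'|}{2\sqrt g} \ge \frac{1}{2\sqrt C}.
\end{equation*}
So on any interval where $g > 0$, $h$ grows at least linearly in the distance to the nearest zero of $g$, giving $g(x) \ge \mathrm{dist}(x, \{g = 0\})^2/(4C)$. On the other hand, where $g > 0$ we also have $|g'|^2 \ge g/C$, while $|g'| \le \varepsilon |x|$.

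Choose a point $x_1$ with $g(x_1) > 0$. Such a point exists because $g$ is not constant; if all such points lie far from $0$, shrink the neighborhood accordingly. Let $x_0$ be the zero of $g$ nearest to $x_1$ on the side of the origin, so that $|x_0| \le |x_1|$. Integrating the lower bound on $|h'|$ from $x_0$ to $x_1$ gives
\begin{equation*}
\sqrt{g(x_1)} \ge \frac{|x_1 - x_0|}{2\sqrt C}.
\end{equation*}
Integrating the upper bound $|h'| = |g'|/(2\sqrt g) \le \varepsilon|x|/(2\sqrt{g})$ along the same interval, and combining it with $g \le C\varepsilon^2 x^2$, pushes $g$ below the lower bound $|x_1-x_0|^2/(4C)$ once $\varepsilon$ is small, provided $|x_1 - x_0|$ is comparable to $|x_1|$.

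The delicate case is when $g$ has zeros accumulating near the origin, so that $|x_1 - x_0|$ may be much smaller than $|x_1|$. This case needs extra care: the scale-invariant comparison must be done relative to the zero set rather than relative to the origin. If this bookkeeping becomes messy, I would instead aim to show that $a = 0$ forces $g \equiv 0$ near $0$, using the exponential decay of $g$ along the gradient flow of $g$. Note that "not constant" in the hypothesis is only assumed on all of $[-1,1]$. So if the argument yields only local vanishing, we would argue that the zero set of $g$ is connected around $0$, or else treat the degenerate case by a separate linearization.

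Once $a > 0$ and $c < 0$ are established, the conclusion follows from Step~1 and the linearization argument above.
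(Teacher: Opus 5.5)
Your overall architecture matches the paper's: reduce to $\dr_x^2 f(0,0)>0$ and $\dr_y^2 f(0,0)<0$, then invoke linear stability. Your trace/determinant check is fine, as is the paper's observation that the symmetric part of $J$ is negative definite. But Step 2, the only substantive step, is not proved.

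Your local comparison between $g \le C\varepsilon^2 x^2$ and $g(x) \ge \mathrm{dist}(x,\{g=0\})^2/(4C)$ only gives a contradiction when $0$ is the zero of $g$ nearest to points $x_1$ arbitrarily close to $0$ with $g(x_1)>0$. It says nothing when zeros of $g$ accumulate at $0$. It cannot work at all when $g\equiv 0$ on a neighbourhood of $0$: then there is no $x_1$ near $0$ with $g(x_1)>0$, and ``shrinking the neighbourhood'' makes things worse, not better. Non-constancy is a global hypothesis on $[-1,1]$, so some global structural argument is unavoidable. Your fallbacks (exponential decay along a gradient flow, ``a separate linearization'') do not supply one, and the degenerate case cannot be linearized away: it has to be shown impossible.

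The missing idea is the content of Proposition~\ref{p.simple}, which the paper simply applies to $f(\cdot,0)$ and $-f(0,\cdot)$.
\begin{itemize}
\item The inequality $g\le C|g'|^2$ forces $g'\neq 0$ wherever $g>0$. So $g$ is strictly monotone on each connected component of $\{g>0\}$, and no such component can have zeros of $g$ at both endpoints.
\item Hence the zero set of $g$ is a closed interval $[p,q]\ni 0$. Since $g$ is not constant, one endpoint, say $q$, satisfies $q<1$, and then $g>0$ on $(q,1]$.
\item Integrating $(\sqrt g)'\ge 1/(2\sqrt C)$ on $(q,1]$, as you did, gives $g(x)\ge (x-q)^2/(4C)$ for $x>q$.
\item Since $g(q)=g'(q)=0$ and $g$ is $\mathcal C^2$, Taylor's formula yields $g''(q)\ge 1/(2C)$.
\item If $p<q$, then $g''\equiv 0$ on $(p,q)$, and continuity of $g''$ would force $g''(q)=0$, a contradiction. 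So the zero set is $\{0\}$ and $\dr_x^2 f(0,0)=g''(0)\ge 1/(2C)>0$.
\end{itemize}
This last step is where the $\mathcal C^2$ hypothesis genuinely enters. A $\mathcal C^{1}$ function vanishing on $[p,q]\ni 0$ and equal to $\mathrm{dist}(x,[p,q])^2$ outside satisfies PL with $C=1/4$, yet has second derivative $0$ at the origin.
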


We stress that the results presented here only concern continuous-time GDA dynamics. I expect that these results remain valid for discrete GDA algorithms, but a rigorous proof of this is left for future work. To do this, it would likely be convenient to start by slightly deforming the example presented here so that the continuous dynamics has an attractive cycle, in place of a continuous family of marginally stable cycles as we have here. 

We briefly mention a connection between the questions discussed here and recent works concerning the functional 
\begin{equation*}  
F : \Ll\{
\begin{array}{rcl}  
\mcl P_{H}(\mathbb T^d) \times \mcl P_{H}(\mathbb T^d) & \to & \R \\
(\mu,\nu) & \mapsto & \int g(x,y) \, \d \mu(x) \, \d \nu(y) + H(\mu) - H(\nu),
\end{array}
\Rr.
\end{equation*}
where $H(\mu) := \int_{\mathbb T^d} \log \Ll( \frac{\d \mu}{\d x} \Rr) \, \d \mu$ is the entropy of $\mu$, $\mcl P_{H}(\mathbb T^d)$ is the space of probability measures on the $d$-dimensional torus $\mathbb T^d$ with finite entropy, and $g : \mathbb T^d \times \mathbb T^d \to \R$ is a continuous function. In \cite{domingo2020mean, wang2024open}, the authors ask whether the Wasserstein GDA flow on $F$ is convergent. Depending on the function $g$, the function $F$ may not necessarily be convex-concave for the Wasserstein geometry. By \cite{otto2000generalization}, a two-sided PL assumption on $F$ corresponds to assuming uniform log-Sobolev inequalities for the minimizers of $(F(\cdot, \nu))_{\nu \in \mcl P(\mathbb T^d)}$ and $(-F(\mu,\cdot))_{\mu \in \mcl P(\mathbb T^d)}$; this is automatic on $\mathbb T^d$ and would be a natural assumption if the torus $\mathbb T^d$ was replaced by the full space $\R^d$. In this infinite-dimensional context, local convergence results have been obtained in \cite{seo2026local, wang2026local}, and two-timescale approaches have been considered in \cite{lu2023two, ma2022provably}. The question of \cite{domingo2020mean, wang2024open} was very recently answered in the negative in \cite{mourrat2026oscillating}.

The rest of the paper is organized as follows. In Section~\ref{s.gen}, we give simple sufficient conditions for a function defined on a compact interval of $\R$ to be PL, and for a function defined on a compact subset of $\R^2$ to satisfy the two-sided PL condition. We also prove Proposition~\ref{p.loc.conv}. We then proceed to construct a function $f$ that satisfies the requirements of Theorem~\ref{t.main} in Section~\ref{s.constr}. As discussed in the caption to Figure~\ref{f.vector_field}, we first build the level lines of $f$, which we identify as the flow lines of an explicit vector field. One key aspect of the construction is that the set of points where the level line is horizontal (i.e.\ the set of points where $\dr_x f = 0$) intersects any horizontal line only once (and similarly for the set of points of vertical tangency). Roughly speaking, the point of Section~\ref{s.gen} is to show that this is essentially sufficient to guarantee that the function $f$ satisfies the two-sided PL condition; the only additional information that we need is that $\dr_x^2 f$ does not vanish wherever $\dr_x f = 0$, and similarly in the $y$ direction. To complete the construction of the function $f$, we need to specify a value for each of the level lines. To facilitate this construction and also the verification of the two-sided PL condition, we do so by specifying the value of $f$ on the set of points where $\dr_x f$ or $\dr_y f$ vanishes, which by our construction is a union of two lines (see the orange lines on Figure~\ref{f.vector_field}). Finally, we give a proof of Corollary~\ref{c.convPL} in Section~\ref{s.convPL}.


%
%
%
%
%
%

\section{General properties of PL functions}
\label{s.gen}

Let $I$ be a compact interval of $\R$. 
We say that a differentiable function $f : I \to \R$ satisfies the PL condition if there exists a constant $C < +\infty$ such that for every $x \in I$, we have
\begin{equation}  
\label{e.PL}
f(x) - \inf f \le C |f'(x)|^2.
\end{equation}

\begin{proposition}[Criterion for PL condition]
\label{p.simple}
Let $f : I \to \R$ be a $\mcl C^2$ function that is not constant. The function $f$ satisfies the PL condition if and only if for every $x \in I$, we have the implication
\begin{equation}  
\label{e.simple}
f'(x) = 0 \quad \implies \quad f''(x) > 0.
\end{equation}
\end{proposition}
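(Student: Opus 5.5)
The plan is to prove the two implications separately, working with $g := f - \inf f \ge 0$. The main tool for necessity is that, wherever $g > 0$, the PL inequality turns into a lower bound on $|(\sqrt g)'|$. For sufficiency, the main tools are compactness and Taylor expansion near critical points.

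\emph{Sufficiency.} Assume \eqref{e.simple} holds.
\begin{enumerate}
\item Every critical point $x_0$ satisfies $f''(x_0) > 0$, so it is a strict local minimum. In particular it is isolated, and by compactness there are finitely many critical points.
\item Every critical point is a global minimizer. Suppose a critical point $x_0$ had $f(x_0) > \inf f = f(x_1)$ for some $x_1 \in I$. Consider the maximum of $f$ on the closed interval between $x_0$ and $x_1$.
\begin{itemize}
\item It cannot be attained only at $x_1$, since $f(x_1) < f(x_0)$.
\item It cannot be attained at $x_0$, since $x_0$ is a strict local minimum.
\item It therefore lies in the interior of that interval. It is then a critical point of $f$ in the interior of $I$ with $f'' \le 0$, which contradicts \eqref{e.simple}.
\end{itemize}
\item Consider the ratio $g/|f'|^2$ on $\{f' \neq 0\}$. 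Away from small neighbourhoods of the finitely many critical points it is continuous on a compact set, hence bounded.
\item Near a critical point $x_0$ we have $g(x_0) = 0$ by the previous step. Taylor's formula gives $g(y) = \tfrac12 f''(x_0)(y-x_0)^2 + o((y-x_0)^2)$ and $f'(y) = f''(x_0)(y-x_0) + o(|y-x_0|)$. Hence the ratio tends to $1/(2f''(x_0))$, and it is bounded near $x_0$ as well.
\end{enumerate}
This gives \eqref{e.PL}. Note that minimizers at endpoints of $I$ with $f' \neq 0$ cause no trouble, since the ratio is $0$ there.

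\emph{Necessity.} Assume \eqref{e.PL} holds and let $x$ be a critical point. Then $g(x) \le C|f'(x)|^2 = 0$, so $x$ is a global minimizer and it remains to show $f''(x) > 0$. Let $Z := \{g = 0\}$, a nonempty closed subset of $I$, with $Z \neq I$ since $f$ is not constant.

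\emph{Key claim.} Let $a$ be an endpoint of a connected component $(a,b)$ of $I \setminus Z$ with $g(a) = 0$. Then $g''(a) \ge 1/(2C)$.
\begin{itemize}
\item On $(a,b)$, the function $\sqrt g$ is $\mcl C^1$ and, by \eqref{e.PL},
\[
|(\sqrt g)'| = \frac{|g'|}{2\sqrt g} \ge \frac{1}{2\sqrt C}.
\]
\item Since this derivative is continuous and never vanishes, it has a fixed sign. As $\sqrt g$ vanishes at $a$ and is positive on $(a,b)$, it is increasing, so $g(y) \ge (y-a)^2/(4C)$ on $(a,b)$.
\item Combined with $g(a) = g'(a) = 0$ and Taylor's formula, this forces $g''(a) \ge 1/(2C)$.
\end{itemize}
The same argument applies to the right endpoint of a component.

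\emph{Deducing $f''(x) > 0$.}
\begin{itemize}
\item If $x$ is a boundary point of $Z$ relative to $I$, then either $x$ is itself such an endpoint, or endpoints $a_n$ of components accumulate at $x$. In the second case continuity of $f''$ gives $f''(x) \ge 1/(2C)$.
\item Otherwise $x$ lies in the interior of $Z$, so some maximal interval $[c,d] \subsetneq I$ contained in $Z$ has nonempty interior and contains $x$. On it $f'' \equiv 0$. But one of $c,d$ is an endpoint of a component of $I \setminus Z$, where $f'' \ge 1/(2C)$, which contradicts continuity of $f''$.
\end{itemize}

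The main obstacle is this necessity direction when $f''(x) = 0$ at a minimizer. A flat or degenerate minimum is not ruled out by purely local behaviour at $x$ alone, so the plan is to use the integrated bound on $\sqrt g$ along neighbouring components of positivity, together with continuity of $f''$.
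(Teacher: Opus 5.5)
Your proof is correct up to two easily repaired slips. In the necessity direction it follows a different route from the paper.

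The paper argues globally. It picks a component $J$ of $\{f > \inf f\}$ with an endpoint $x_0$ interior to $I$, fixes the sign of $f'$ on $J$, and integrates the PL inequality as you do to get $f''(x_0) \ge 1/(2C)$. It then propagates: $f' > 0$ on $J$ forces $J$ to extend to the end of $I$, and $f''(x_0) > 0$ gives the mirror picture on the other side. Hence $\{f = \inf f\} = \{x_0\}$ and $x_0$ is the only critical point. This is shorter and yields uniqueness of the minimizer for free.

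Your argument stays local at each critical point. It replaces the propagation by a case analysis on $Z$ (endpoint of a component of $I \setminus Z$, limit of such endpoints, or interior point of $Z$), together with continuity of $f''$. It does not give uniqueness directly. On the other hand, it treats uniformly a critical point sitting at an endpoint of $I$, e.g.\ when $\{f = \inf f\}$ is a single endpoint of $I$, a case the paper's wording passes over. Your sufficiency argument is essentially the paper's: you show every critical point is a global minimizer, while the paper shows that $f'$ vanishes at most once.

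\textbf{First slip.} Your key claim silently uses $g'(a) = 0$. This holds when $a$ is interior to $I$ (it is then an interior global minimum) or when $a$ is the critical point $x$ itself. It fails at an endpoint of $I$ in general: $g(y) = y$ on $[0,1]$ is PL with $C = 1$, yet $g''(0) = 0$. So add this hypothesis to the claim, and in the accumulation case use only the $a_n$ lying in the interior of $I$ (all but at most two of them).

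\textbf{Second slip.} In your last bullet, take an endpoint of $[c,d]$ that is not an endpoint of $I$, say $d$. It need not be an endpoint of a component of $I \setminus Z$, since $Z$ may accumulate at $d$ from outside $[c,d]$. By maximality of $[c,d]$, however, $d$ is a boundary point of $Z$. It is also a critical point. So your first bullet already gives $f''(d) \ge 1/(2C)$, which contradicts $f'' \equiv 0$ on $[c,d]$.
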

\begin{remark}
Notice that Proposition~\ref{p.simple} does not assume that the minimizer of $f$ is unique; this uniqueness follows from the proof below under any of the two stated equivalent conditions. 
\end{remark}
\begin{proof}[Proof of Proposition~\ref{p.simple}]
We write $I = [a,b]$, and first show the direct implication, assuming that $f$ satisfies the PL condition. Without loss of generality, we assume that $\inf f = 0$. We see from~\eqref{e.PL} that if we have $f'(x_0) = 0$ for some $x_0 \in I$, then we must have $f(x_0) = \inf f = 0$. Let $J$ be a connected component of the open set $\{f > 0\}$. The set $J$ is well defined since we assume that $f$ is not constant. If this set is the entire interval $I$, then there is nothing to show. Else, one of the endpoints of $J$ is different from $\{a,b\}$. For definiteness let us assume that the left endpoint of $J$, say $x_0$, is not $a$. In this case we have $f(x_0) = 0$. We also note that $f'$ cannot vanish in $J$ (since this would imply the vanishing of $f$ as previously observed), and since $f \ge 0$, we must have that $f' > 0$ on $J$. By~\eqref{e.PL}, we deduce that for every $x \in J$,
\begin{equation*}  
\frac{f'(x)}{\sqrt{f(x)}} \ge\frac{1}{\sqrt{C}}. 
\end{equation*}
Integrating this, we obtain that for every $x \in J$, 
\begin{equation*}  
\sqrt{f(x)} \ge \frac{x-x_0}{2\sqrt{C}}. 
\end{equation*}
This implies in particular that $f''(x_0) \ge \frac 1 {2C}$. We have seen that $f' > 0$ in~$J$, so it is not possible that $f$ vanishes at the right endpoint of $J$; in other words, we must have $J = (x_0,b]$. Since $f''(x_0) > 0$, the same picture holds as well to the left of $x_0$: we must have $f' < 0$ everywhere on $[a,x_0)$. The point~$x_0$ is thus the unique point at which $f'$ vanishes, and we have verified that $f''(x_0) > 0$, so the proof of the direct implication is complete.

We now turn to the converse implication, assuming that for every $x \in I$, the implication \eqref{e.simple} is valid. 
Under the stated assumption, if $f'(x) = 0$ for some $x \in I$, then in a neighborhood of~$x$, we have that $f'$ is negative to the left of $x$ and is positive to the right of $x$. This implies that $f'$ can vanish at most once. 
If $f'$ does not vanish anywhere, then by continuity of $f'$ and compactness we have that $\msf c := \inf |f'|^2 > 0$, so it suffices to take $C = (\sup f - \inf f)/\msf c$ to ensure that \eqref{e.PL} holds. Now let us assume that $f'$ vanishes at some $x_0 \in I$. We can find a neighborhood $U$ of $x_0$ such that $f'' \ge f''(x_0)/2 =: c > 0$ on $U$. Using also that $f''$ is bounded, we can integrate this to
\begin{equation*}  
\text{for every } x \in U, \quad f(x) - f(x_0) \le \frac {\sup f''} 2 (x-x_0)^2 \ \text{ and } \ |f'(x)|\ge c|x-x_0|. 
\end{equation*}
The PL condition \eqref{e.PL} is thus satisfied for every $x \in U$ provided that we choose $C \ge \sup f''/(2 c^2)$.
Since $|f'|^2$ is bounded away from zero in $I \setminus U$, we can again make sure that \eqref{e.PL} is satisfied by enlarging $C < +\infty$ as necessary.
\end{proof}

For a function $f : I^2 \to \R$, we say that $f$ satisfies the two-sided PL condition if there exists a constant $C < +\infty$ such that \eqref{e.2PL} is satisfied for every $x,y \in I^2$, with the interval $[-1,1]$ there replaced by $I$. In other words, the functions $(f(\cdot,y))_{y \in I}$ and $(-f(x,\cdot))_{x \in I}$ all satisfy the PL condition with the same constant.

\begin{proof}[Proof of Proposition~\ref{p.loc.conv}]
Using Proposition~\ref{p.simple} and the two-sided PL assumption, we see that $\dr_x^2 f(0,0) > 0$ and $\dr_y^2 f(0,0) < 0$. At the origin, the Jacobian of the vector field $(-\dr_x f, \dr_y f)$ is 
\begin{equation*}  
J := \begin{pmatrix}  
-\dr_x^2 f & -\dr_x \dr_y f \\
\dr_x \dr_y f  & \dr_y^2 f
\end{pmatrix}(0,0).
\end{equation*}
The symmetric part of this matrix, namely $(J + J^{\mathsf{T}})/2$, is negative definite. Hence, the eigenvalues of $J$ have negative real part, and the claim thus follows by the linear stability theorem. 
\end{proof}


\begin{proposition}[criterion for two-sided PL]
\label{p.crit.2PL}
Let $f : I^2 \to \R$ be a $\mcl C^2$ function. In order for the function $f$ to satisfy the two-sided PL condition, it suffices that for every $z \in I^2$, the following two implications hold:
\begin{equation}  
\label{e.impl1}
\partial_x f(z) = 0 \quad \implies \quad \partial_x^2 f(z) > 0, 
\end{equation}
\begin{equation}  
\label{e.impl2}
\partial_y f(z) = 0 \quad \implies \quad \partial_y^2 f(z) < 0.
\end{equation}
\end{proposition}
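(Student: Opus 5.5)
By symmetry (replacing $f$ by $-f$ and exchanging the roles of $x$ and $y$), I will only treat the first line of \eqref{e.2PL}, which follows from \eqref{e.impl1}. The goal is a uniform constant $C$ such that for every $y \in I$ and every $x \in I$,
\begin{equation*}
f(x,y) - \inf_{x'} f(x',y) \le C |\dr_x f(x,y)|^2.
\end{equation*}
Proposition~\ref{p.simple} already yields a constant $C_y$ for each fixed $y$. If $f(\cdot,y)$ is constant then $\dr_x^2 f = 0$ at every point where $\dr_x f=0$, which contradicts \eqref{e.impl1}, so Proposition~\ref{p.simple} does apply. What remains is to make $C_y$ uniform in $y$, and I will do this by a compactness argument that quantifies the proof of Proposition~\ref{p.simple}.

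\textbf{Step 1: the critical set is uniformly isolated.} Let $Z := \{z \in I^2 : \dr_x f(z) = 0\}$. This set is compact, and $\dr_x^2 f > 0$ on $Z$. By continuity of $\dr_x^2 f$ and compactness, there are $c>0$ and $\delta>0$ such that $\dr_x^2 f \ge c$ on the $\delta$-neighborhood $Z_\delta$ of $Z$ (intersected with $I^2$). Outside $Z_\delta$, the continuous function $|\dr_x f|$ does not vanish on the compact set $I^2 \setminus Z_\delta$, so $|\dr_x f|^2 \ge \msf c > 0$ there.

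\textbf{Step 2: the local estimate near $Z$.} For each $y$, Proposition~\ref{p.simple} shows that $f(\cdot,y)$ has at most one critical point $x_0(y)$, and if it exists it is the minimizer. Take $(x,y) \in Z_\delta$. There is $(x_1,y_1) \in Z$ with $|(x,y)-(x_1,y_1)| < \delta$, but in general $y_1 \ne y$, so this point is not directly useful. The main obstacle is to show that the horizontal segment from $x$ to the critical point $x_0(y)$ lies in a region where $\dr_x^2 f \ge c$.

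I will handle this by running the compactness argument on the horizontal slices themselves. Choose $\delta$ smaller if necessary so that $\dr_x^2 f \ge c$ on the set of points within distance $2\delta$ of $Z$. Suppose $(x,y)$ satisfies $|\dr_x f(x,y)| < \eta$ for a small $\eta$. By compactness, for $\eta$ small enough such a point lies within distance $\delta$ of $Z$. Then $\dr_x^2 f(\cdot,y) \ge c$ on an $x$-interval of length about $\delta$ around $x$. On that interval, $\dr_x f(\cdot,y)$ changes by at least $c\delta$, so if $\eta < c\delta/2$ it must vanish inside the interval, at the point $x_0(y)$.

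With this, on the horizontal segment between $x$ and $x_0(y)$ we have $c \le \dr_x^2 f \le M := \sup |\dr_x^2 f|$. This gives
\begin{equation*}
f(x,y) - f(x_0(y),y) \le \tfrac M2 |x-x_0(y)|^2 \quad \text{and} \quad |\dr_x f(x,y)| \ge c|x-x_0(y)|,
\end{equation*}
so the inequality holds with constant $M/(2c^2)$ whenever $|\dr_x f(x,y)| < \eta$.

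\textbf{Step 3: the remaining points.} On the set where $|\dr_x f| \ge \eta$, we use the bound $f(x,y) - \inf_{x'} f(x',y) \le 2\sup|f|$, so the constant $2\sup|f|/\eta^2$ suffices there. Taking $C$ to be the maximum of this constant and $M/(2c^2)$ gives the uniform first inequality. Applying the same argument to $-f$ with $x$ and $y$ exchanged gives the second inequality, using \eqref{e.impl2}.
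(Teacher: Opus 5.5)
Your strategy is the paper's: a neighbourhood of $\mcl Z=\{\dr_x f=0\}$ on which $\dr_x^2 f\ge c$, the quadratic estimate from the proof of Proposition~\ref{p.simple} there, and a lower bound on $|\dr_x f|$ elsewhere. Your $2\delta$-neighbourhood device is more careful than the paper's one-line ``arguing as in the proof of Proposition~\ref{p.simple}'', because it checks that the horizontal segment from $x$ to the minimizer stays where $\dr_x^2 f\ge c$.

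There is, however, a gap in the step ``if $\eta<c\delta/2$ it must vanish inside the interval, at the point $x_0(y)$''. This fails when $x$ lies within $\delta$ of an endpoint of $I$ and the downhill direction points toward that endpoint. The interval is then cut off by the endpoint, and the slice may have no critical point at all. For example, take $I=[-1,1]$ and $f(x,y)=x^2-4xy-y^2$, which satisfies both \eqref{e.impl1} and \eqref{e.impl2}. Every point $(x,y)$ close to $(1,\tfrac12)$ with $y>\tfrac12$ has $|\dr_x f(x,y)|=4y-2x$ small. Yet $f(\cdot,y)$ is strictly decreasing on $I$, so $x_0(y)$ does not exist. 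Such points are covered neither by your Step~2 nor by Step~3.

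The repair is short. Say $\dr_x f(x,y)>0$ and $\dr_x f(\cdot,y)$ has no zero in $[\max(a,x-\delta),x]$, where $a$ is the left endpoint of $I$ (the other sign is symmetric). Your own computation shows this forces $x-a<\delta$, so $\dr_x f(\cdot,y)>0$ on all of $[a,x]$. You already extracted from Proposition~\ref{p.simple} that a critical point, if any, is the unique minimizer with $\dr_x f<0$ to its left. Hence the slice has no critical point, is increasing on $I$, and $\inf_{x'}f(x',y)=f(a,y)$. Since $\dr_x^2 f\ge c$ on $[a,x]\times\{y\}$, you have $0<\dr_x f(s,y)\le \dr_x f(x,y)-c(x-s)$ for $s\in[a,x]$. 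Integrating gives
\[
f(x,y)-f(a,y)\le (x-a)\,\dr_x f(x,y)-\tfrac c2 (x-a)^2\le \frac{|\dr_x f(x,y)|^2}{2c}.
\]
So the inequality holds at these points with constant $1/(2c)$. With this case added, your argument is complete.
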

\begin{proof}
By symmetry, it suffices to establish the uniform PL condition with respect to the first variable. Consider the set 
\begin{equation}  
\label{e.def.mclZ}
\mcl Z := \{z \in I^2 \ : \ \dr_x f(z) = 0\}.
\end{equation}
Since the function $f$ is $\mcl C^2$, for every $z \in \mcl Z$, one can find a constant $c_z > 0$ and an open neighborhood $U_z$ of $z$ such that $\dr_x^2 f\ge c_z$ in $U_z$. Since the set $\mcl Z$ is compact, we can cover it with a finite number of such neighborhoods, and thus we can build an open set $U \subset I^2$ containing $\mcl Z$ and a constant $c > 0$ such that $\dr_x^2 f \ge c$ on $U$. Arguing as in the proof of Proposition~\ref{p.simple}, we see that the condition \eqref{e.PL} holds for all the partial functions $(f(\cdot, y))_{y \in I}$ inside $U$ with the constant $C = \sup \dr_x^2 f/(2c^2)$. On the complement $I^2 \setminus U$, the derivative $|\partial_x f|$ is bounded away from zero, and thus one can adjust the constant in the PL condition so that it holds everywhere.
\end{proof}

%
%
%
%
%
%

\section{Construction of the function}
\label{s.constr}

In order to construct a function $f$ satisfying the requirements of Theorem~\ref{t.main}, we will start by constructing its level lines, which will be described as the flow lines of a particular vector field $\v$. Before defining $\v$ itself, we need to set up a handful of properties of the polynomial
\begin{equation*}  
P(T) := T^3 + T^2 + T - \frac 1 3.
\end{equation*}
Substituting $U-1/3$ for $T$ and applying Cardano's formula, we find that this polynomial has a unique real root $\ga$ given by
\begin{equation*}  
\ga := \frac 1 3 \Ll( \sqrt[3]{6\sqrt{2} + 8} - \sqrt[3]{6\sqrt{2}-8} - 1 \Rr) \simeq 0.2531,
\end{equation*}
and that
\begin{equation*}  
P(T) = (T-\ga)\Ll(T^2 + aT + b\Rr),
\end{equation*}
where for convenience, we have set $a := \ga + 1 \simeq 1.2531$ and $b := 1/(3\ga) = \ga^2 + \ga + 1 \simeq 1.3171$.
We let $\phi : \R_+ \to [1,+\infty)$ denote a $\mcl C^\infty$ function that is constant equal to $1$ on $[0,1/2]$, is non-decreasing, and is twice the identity on $[1,+\infty)$. Concretely, we can for instance take
\begin{equation*}  
\phi(t) = \begin{cases} 
1 & \text{ if } t \leq \tfrac 1 2, \\ 
1 + \Ll(2t-1\Rr)\rho(2t-1) & \text{ if } t \in (\tfrac 1 2, 1), \\ 
2t & \text{ if }t \geq 1, 
\end{cases}
\end{equation*}
where $\rho : (0,1) \to (0,1)$ is given by $\rho(u) := e^{-1/u}\big/(e^{-1/u} + e^{-1/(1-u)})$.
With this in place, we define, for every $x,y \in \R$,
\begin{equation}  
\label{e.def.v}
\v(x,y) = 
\begin{pmatrix}  
\v_1(x,y) \\
\v_2(x,y)
\end{pmatrix}
:= 
\begin{pmatrix}  
(\ga y - x) \phi(x^2 + axy + b y^2) \\
(y + \ga x) \phi(y^2 - axy + b x^2)
\end{pmatrix}.
\end{equation}
Note that since the polynomial $T^2 + a T + b$ has no real root, the arguments inside the function $\phi$ in \eqref{e.def.v} are indeed nonnegative. The exact form of $\v$, with the factorization into the polynomial $P$ at large distances, will be used around \eqref{e.v.outside} to identify the large-scale integral of motion of the dynamics.

We say that a trajectory $(M(t))_{t \in I}$, where $I$ is an interval of $\R$ and $M(t) \in \R^2$, is a \emph{flow line of} $\v$ if for every $t \in I$, we have
\begin{equation*}  
\dr_t M(t) = \v(M(t)).
\end{equation*}
We say that it is a \emph{complete flow line of} $\v$  if it is a flow line of $\v$ and the interval $I$ is maximal for this property. We also introduce the notation
\begin{equation*}  
\mcl X := \{(x,y) \in \R^2 \ : \ x = \ga y \text { or } y = - \ga x\}.
\end{equation*}
\begin{proposition}
\label{p.flow}
Let $(M(t))_{t \in I}$ be a complete flow line of $\v$.
Exactly one of these four possibilities is valid.

(0) The flow line stays put at the origin for all times.

(1) The flow line intersects $\mcl X$ exactly once.

(2) The flow line never intersects $\mcl X$, and $\lim_{t \to +\infty} M(t) = 0$.

(3) The flow line never intersects $\mcl X$, and $\lim_{t \to -\infty} M(t) = 0$.

\end{proposition}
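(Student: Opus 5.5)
The plan is to replace the lines of $\mcl X$ by the two linear forms
\begin{equation*}
g_1 := x - \ga y, \qquad g_2 := y + \ga x,
\end{equation*}
so that $\mcl X = \{g_1 g_2 = 0\}$. Write $\phi_1 := \phi(x^2+axy+by^2)$ and $\phi_2 := \phi(y^2-axy+bx^2)$, both $\ge 1$. Then \eqref{e.def.v} reads $\v_1 = -g_1\phi_1$ and $\v_2 = g_2\phi_2$. Along a flow line,
\begin{equation*}
\dr_t g_1 = -g_1\phi_1 - \ga g_2 \phi_2, \qquad \dr_t g_2 = g_2\phi_2 - \ga g_1\phi_1 .
\end{equation*}
Since $\phi \ge 1$ and $(g_1,g_2)$ is an invertible linear change of variables, the origin is the only zero of $\v$. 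By uniqueness of solutions, a flow line through the origin is case (0), and any other flow line never meets the origin.

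\emph{Step 1: at most one intersection with $\mcl X$.} I will compute $\dr_t(g_1g_2)$ on $\mcl X\setminus\{0\}$. On $\{g_1=0\}$ it equals $g_2\,\dr_t g_1 = -\ga g_2^2\phi_2<0$. On $\{g_2=0\}$ it equals $g_1\,\dr_t g_2 = -\ga g_1^2\phi_1<0$. So away from the origin, $g_1g_2$ is strictly decreasing whenever it vanishes. Every zero of $t\mapsto g_1g_2(M(t))$ is therefore a transversal crossing from $\{g_1g_2>0\}$ to $\{g_1g_2<0\}$. After such a crossing, the function can never return to $0$, since that would require it to increase through $0$. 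Hence the flow line meets $\mcl X$ at most once; if it meets it once, we are in case (1).

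\emph{Step 2: no intersection forces convergence to the origin.} Assume the flow line avoids $\mcl X$, so it stays in one of the four open sectors.

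In a sector where $g_1g_2>0$, say $g_1,g_2>0$ (the other is symmetric), we have $\dr_t g_1 \le -g_1-\ga g_2<0$. So $g_1$ is strictly monotone along the flow line. If the forward orbit is bounded, the flow line exists for all positive times, and its $\omega$-limit set is nonempty, compact and invariant. It is contained in $\{g_1\ge 0,\, g_2\ge 0\}$ and lies on a level set $\{g_1 = L\}$. Invariance forces $\dr_t g_1 = 0$ on it, that is $g_1\phi_1+\ga g_2\phi_2=0$ with $g_1,g_2\ge 0$, which gives $g_1=g_2=0$. Thus the $\omega$-limit set is $\{0\}$, and we are in case (2).

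In a sector where $g_1g_2<0$, say $g_1>0>g_2$, we have $\dr_t g_2 = g_2\phi_2-\ga g_1\phi_1<0$. The same argument in backward time, using the $\alpha$-limit set, yields $\lim_{t\to-\infty}M(t)=0$, which is case (3).

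\emph{Step 3: exclusivity, and the main obstacle.} The four cases are mutually exclusive. Case (0) is the only flow line meeting the origin, and cases (2) and (3) do not meet $\mcl X$, unlike case (1). Cases (2) and (3) cannot both hold: a flow line avoiding $\mcl X$ lies in a single sector, and the monotone quantity ($g_1$ or $g_2$) excludes convergence to $0$ in the other time direction.

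The main obstacle is the boundedness of orbits used in Step 2. It is needed both to guarantee that the flow line is defined for all forward (respectively backward) times, since $\v$ is cubic at infinity and could a priori blow up, and to make the limit sets nonempty. Here I would exploit the factorization through $P$. Outside the region where either quadratic form is below $1$, we have $\phi(t)=2t$, and $\v$ is an explicit cubic field. I expect to check directly that this field preserves a quantity $H$, namely the $L^4$ norm after a rotation by $\pi/8$, with $H$ proper. This relies on the identity $P(\ga)=0$, which makes $\nabla H$ orthogonal to $\v$.

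With this in hand, a flow line that enters the region outside both ellipses moves along a compact level set of $H$. Such a flow line either stays in that region forever, in which case it is bounded, or re-enters a fixed compact neighborhood of the ellipses. Either way, the orbit stays in a compact set that depends only on the initial point, which closes the argument.
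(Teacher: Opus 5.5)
Your Step 1 is correct and is the paper's argument in compact form: the sign of $\dr_t(g_1g_2)$ on $\mcl X\setminus\{0\}$ encodes the same crossing directions between quadrants. Your $\omega$-limit argument in Step 2 also works as a substitute for the paper's integrability argument, \emph{provided} the orbit is bounded in the relevant time direction.

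The gap is the mechanism you propose for that boundedness, and it fails. The rotated $L^4$ quantity $E$ is not conserved along flow lines of $\v$. Outside a large ball, $\nabla E = 6(-\v_1,\v_2)$, so $\nabla E\cdot\v = 6(\v_2^2-\v_1^2)\neq 0$; in fact $\v$ is there the descent--ascent field of $E$ itself. The factorization through $P(\ga)=0$ makes $E$ invariant along the GDA direction $(-\dr_x f,\dr_y f)$, which is parallel to $(\v_2,\v_1)$. That is how the paper uses $E$ in the proof of Theorem~\ref{t.main}.

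No other choice of $H$ can rescue this step. By your own monotonicity argument, every forward orbit in $\{g_1>0>g_2\}$ is unbounded. A bounded one would have an invariant $\omega$-limit set inside $\{g_2=L\}$ with $L<0$, on which $\dr_t g_2\le L\phi_2<0$. But a proper $H$ conserved outside a compact set would confine every orbit started far enough out. So the level lines of $f$ are not closed curves, and the claim that every orbit stays in a compact set depending only on its initial point is false. Your re-entry argument would also be incomplete even with such an $H$, since $H$ may change each time the orbit passes through the inner region.

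What is true, and all you need, is boundedness in the correct time direction within each sector. This follows from a linear monotone quantity. In $\{g_1,g_2>0\}$,
\begin{equation*}
\dr_t\bigl(g_2+\ga^{-1}g_1\bigr) = -(\ga+\ga^{-1})\,g_1\phi_1 \le 0,
\end{equation*}
so for $t\ge 0$ the orbit stays in the bounded triangle $\{g_1>0,\ g_2>0,\ g_2+\ga^{-1}g_1\le g_2(0)+\ga^{-1}g_1(0)\}$. This is exactly what the paper obtains by integrating $-\dot\ell_1=\ell_1\phi_1+\ga\ell_2\phi_2$ to get $\ell_2(t)\le \ell_2(0)+\ga^{-1}\ell_1(0)$. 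Likewise, in $\{g_1>0>g_2\}$ one has $\dr_t\bigl(g_1-\ga^{-1}g_2\bigr) = -(\ga+\ga^{-1})\,g_2\phi_2\ge 0$, which bounds the backward orbit. The two remaining sectors follow from the oddness of $\v$. With this replacing your final paragraph, the rest of your argument goes through.
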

\begin{proof}
The origin is indeed a fixed point. For the other cases, we make use of the change of variables $\ell_1 := x - \ga y$ and $\ell_2 := y + \ga x$, so that $\mcl X = \{\ell_1 = 0\} \cup \{\ell_2 = 0\}$, and we also denote $\phi_1 := \phi(x^2 + axy + by^2) \ge 1$ and
$\phi_2 := \phi(y^2 - axy + bx^2) \ge 1$. We use the notation $M(t) = (x(t),y(t))$ and use an upper dot to indicate the time derivative along the flow line.  Using $\dot x = \v_1 = -\ell_1\phi_1$ and
$\dot y = \v_2 = \ell_2\phi_2$, we see that
\begin{equation*}
\dot \ell_1 = -\ell_1\phi_1 - \ga \ell_2 \phi_2, \qquad \dot \ell_2 = -\ga \ell_1 \phi_1 + \ell_2 \phi_2.
\end{equation*}
We introduce the four open quadrants $Q_{\pm \pm}$ defined by
\begin{equation*}  
Q_{\pm \pm} := \{\pm \ell_1 > 0 \quad \text{ and } \quad \pm \ell_2 > 0\}.
\end{equation*}

\smallskip

\noindent \emph{Step 1.} We first argue that the flow line can cross $\{\ell_1 = 0\} \cup \{\ell_2 = 0\}$ at most once. On the half-line
$\{\ell_1 = 0,\, \ell_2 > 0\}$, we have $\dot \ell_1 = -\ga \ell_2 \phi_2 < 0$, so every crossing there goes from $Q_{++}$ into $Q_{-+}$. On $\{\ell_2 = 0,\, \ell_1 > 0\}$, we have $\dot \ell_2 = -\ga \ell_1\phi_1 < 0$, so every crossing goes from $Q_{++}$ into $Q_{+-}$. Similarly, the two boundary segments $\{\ell_1=0,\,\ell_2<0\}$ and $\{\ell_2=0,\,\ell_1<0\}$ are crossed from $Q_{--}$
into $Q_{+-}$ and $Q_{-+}$ respectively. In particular, every trajectory that enters $Q_{-+}$ never leaves it, and similarly for $Q_{+-}$. Conversely, every backward-in-time trajectory that enters $Q_{++}$ or $Q_{--}$ never leaves it. Since every crossing of
$\{\ell_1=0\}\cup\{\ell_2=0\}$ away from the origin corresponds to a transition from
$\{Q_{++},Q_{--}\}$ into $\{Q_{-+},Q_{+-}\}$, at most one such crossing can occur along any
trajectory.

\smallskip

\noindent \emph{Step 2.} We now argue that if a flow line never intersects $\mcl X$, then it is
trapped in one of the four quadrants (these being the connected components of
$\R^2 \setminus \mcl X$), and converges to the origin as $t \to +\infty$ if the
quadrant is $Q_{++}$ or $Q_{--}$, and as $t \to -\infty$ if the quadrant is $Q_{+-}$
or $Q_{-+}$.

We first consider the case of a flow line trapped in $Q_{++}$. Shifting time, we
assume that $0 \in I$, and we set $T := \sup I$. Since $\ell_1, \ell_2 > 0$ and
$\phi_1, \phi_2 \ge 1$ on $Q_{++}$, we have
$\dot \ell_1 = -\ell_1 \phi_1 - \ga \ell_2 \phi_2 < 0$ along the flow line, so the
function $t \mapsto \ell_1(t)$ is decreasing on $[0,T)$, with values in
$(0, \ell_1(0)]$. Integrating the identity
$-\dot \ell_1 = \ell_1 \phi_1 + \ga \ell_2 \phi_2$, whose right-hand side is a sum of
two positive terms, we obtain that
\begin{equation}
\label{e.integrable}
\int_0^{T} \ell_1 \phi_1 \, \d t \le \ell_1(0)
\qquad \text{and} \qquad
\int_0^{T} \ell_2 \phi_2 \, \d t \le \ga^{-1} \ell_1(0).
\end{equation}
Since $\dot \ell_2 \le \ell_2 \phi_2$, the second inequality in \eqref{e.integrable}
yields, for every $t \in [0,T)$,
\begin{equation*}
\ell_2(t) \le \ell_2(0) + \int_0^{t} \ell_2 \phi_2 \, \d s
\le \ell_2(0) + \ga^{-1} \ell_1(0).
\end{equation*}
The flow line therefore remains in a compact subset of $\R^2$ on $[0,T)$. If $T$ were
finite, the solution could then be extended beyond $T$, contradicting the maximality
of $I$; hence $T = +\infty$. Recalling that $\phi_1, \phi_2 \ge 1$, we deduce from
\eqref{e.integrable} that $\ell_1$ and $\ell_2$ are integrable on $[0,+\infty)$. Being
also monotone, the function $\ell_1$ must converge to $0$ as $t \to +\infty$. The
function $\ell_2$ may fail to be monotone, but since $0 < \ga < 1$ (recall that
$\ga \simeq 0.2531$), we have
\begin{equation*}
\frac{\d}{\d t} \Ll(\ell_2 - \ell_1\Rr)
= (1-\ga) \ell_1 \phi_1 + (1+\ga) \ell_2 \phi_2 \ge 0,
\end{equation*}
so the function $\ell_2 - \ell_1$ is non-decreasing and bounded, and thus converges as
$t$ tends to infinity. Hence $\ell_2 = (\ell_2 - \ell_1) + \ell_1$ converges as well,
and since $\ell_2$ is integrable on $[0,+\infty)$, its limit must be $0$. We have thus
shown that flow lines trapped in $Q_{++}$ converge to the origin as $t \to +\infty$.

The remaining quadrants are handled by symmetry. The vector field $\v$ is odd, so the
mapping $z \mapsto -z$ maps complete flow lines to complete flow lines; since it maps
$Q_{--}$ onto $Q_{++}$, flow lines trapped in $Q_{--}$ also converge to the origin as
$t \to +\infty$. Moreover, denoting by $R(x,y) := (-y,x)$ the rotation by $\pi/2$, one
can check from \eqref{e.def.v} that $\v \circ R = R^{-1} \circ \v$, and thus that if
$(M(t))_{t \in I}$ is a complete flow line, then so is $(R M(-t))_{t \in -I}$. Since
$R$ maps $Q_{+-}$ onto $Q_{++}$ and $Q_{-+}$ onto $Q_{--}$, we conclude that flow
lines trapped in $Q_{+-}$ or $Q_{-+}$ converge to the origin as $t \to -\infty$.
\end{proof}
Since the vector field $\v$ is smooth, it is not possible for two flow lines to intersect. In order to define the function $f$, it thus suffices to prescribe its value on every complete flow line. Let $(M(t))_{t \in I}$ be a complete flow line. If this flow line never intersects $\mcl X$, or if it stays put at the origin, then we set $f(M(t)) = 0$ for every $t \in I$. If the flow line does intersect $\mcl X$, then by Proposition~\ref{p.flow}, there exists a unique time $t_0 \in I$ such that $M(t_0)$ belongs to~$\mcl X$. Writing $(x_0, y_0 ) = M(t_0)$, we set, for every $t \in I$,
\begin{equation*}  
f(M(t)) = \frac \ga 2 x_0^2 + x_0 y_0 - \frac \ga 2 y_0^2.
\end{equation*}
We note that the prescribed value is never zero in this case, since for every $x,y \in \R$, we have
\begin{equation}  
\label{e.f.on.mclX}
f(\ga y, y ) = \frac{1}{2} (\ga^3 + \ga)y^2 \quad \text{ and } \quad f(x, -\ga x) = -\frac 1 2 (\ga^3 + \ga) x^2.
\end{equation}
Let us pause to describe the geometric meaning of the set $\mcl X$, which is the union
of the two orange lines on Figure~\ref{f.vector_field}. Since the function $\phi$
takes only positive values, we see from \eqref{e.def.v} that $\v_1$ vanishes exactly
on the line $\{x = \ga y\}$, and $\v_2$ vanishes exactly on the line
$\{y = -\ga x\}$. In other words, the vector field $\v$, and thus also the level line
of $f$, is vertical on the first line and horizontal on the second (outside of the
origin, where $\v$ vanishes). As will be shown in Proposition~\ref{p.good.f}, the
function $f$ is differentiable with a gradient that vanishes only at the origin; and
since $f$ is constant along the flow lines of $\v$, its gradient is everywhere
orthogonal to $\v$. Outside of the origin, the condition $\dr_x f = 0$ is therefore
equivalent to $\nabla f$ being vertical, that is, to $\v$ being horizontal; and
similarly with the roles of ``horizontal'' and ``vertical'' exchanged. In summary,
\begin{equation}
\label{e.orange.lines}
\{\dr_x f = 0\} = \{y = -\ga x\}
\qquad \text{and} \qquad
\{\dr_y f = 0\} = \{x = \ga y\},
\end{equation}
and $\mcl X$ is the union of these two sets. This explains why the set $\mcl X$ is
central to the verification of the two-sided PL condition: by
Proposition~\ref{p.crit.2PL}, it suffices to check the implications \eqref{e.impl1}
and \eqref{e.impl2}, and by \eqref{e.orange.lines}, these are non-trivial only on
$\mcl X$. Prescribing the values of $f$ on $\mcl X$ as in \eqref{e.f.on.mclX} gives us
direct access to the behavior of $f$ near this set. We also point out that, once the
two-sided PL condition is established, the identities \eqref{e.orange.lines} imply
that the points of $\{y = -\ga x\}$ are minimizers of the partial functions
$f(\cdot,y)$, while the points of $\{x = \ga y\}$ are maximizers of the partial
functions $f(x,\cdot)$.

We recall that we denote by $B_r$ the open Euclidean ball of radius $r$ centered at the origin.
\begin{proposition}
\label{p.near.origin}
There exists $r > 0$ such that for every $(x,y) \in B_r$, we have
\begin{equation} 
\label{e.near.origin}
f(x,y) = \frac \ga 2 x^2 + xy - \frac \ga 2 y^2.
\end{equation}
\end{proposition}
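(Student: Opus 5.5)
Near the origin, both arguments of $\phi$ in \eqref{e.def.v} are small, so $\phi_1=\phi_2=1$ and $\v$ reduces to the linear field
\begin{equation*}
\v(x,y) = (\ga y - x,\; y + \ga x).
\end{equation*}
The plan is to show that the quadratic form $g(x,y) := \frac \ga 2 x^2 + xy - \frac \ga 2 y^2$ is an integral of motion for this linear field. We then compare the value of $f$ on a flow line, fixed by its unique intersection with $\mcl X$, with the value of $g$ at that intersection point. For the first step, we compute
\begin{equation*}
\nabla g \cdot \v = (\ga x + y)(\ga y - x) + (x - \ga y)(y + \ga x) = (\ga x + y)(\ga y - x) - (\ga y - x)(y+\ga x) = 0.
\end{equation*}
So $g$ is constant along flow lines of $\v$ wherever $\phi_1 = \phi_2 = 1$. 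The latter holds on the region where both quadratic forms $x^2+axy+by^2$ and $y^2-axy+bx^2$ are at most $1/2$. This region contains a ball $B_{r_0}$, since both forms are positive definite.

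The main obstacle is that a flow line passing through $B_r$ might leave the region where $\v$ is linear before it reaches $\mcl X$. The invariance of $g$ would then not carry over directly. To handle this, we use that the level sets of $g$ are hyperbolas with asymptotes through the origin, so small values of $|g|$ do not by themselves keep a point near the origin. Instead, we track the linear dynamics in the coordinates $(\ell_1,\ell_2)$ from the proof of Proposition~\ref{p.flow}. With $\phi_1=\phi_2=1$, the system $\dot\ell_1 = -\ell_1 - \ga\ell_2$, $\dot\ell_2 = -\ga \ell_1 + \ell_2$ has a hyperbolic saddle at the origin. We aim to show that a trajectory starting in $B_r$, with $r \ll r_0$, reaches $\mcl X$ (or converges to $0$) while staying inside $B_{r_0}$, both forward and backward in time.

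To do this, we note that $\mcl X$ consists of the lines $\ell_1=0$ and $\ell_2=0$. By Step 1 of Proposition~\ref{p.flow}, a point in $Q_{++}$ either hits $\mcl X$ in forward time or stays in $Q_{++}$. In $Q_{++}$, the quantity $\ell_1$ decreases, and $\ell_2 \le \ell_2(0) + \ga^{-1}\ell_1(0)$ by the estimate after \eqref{e.integrable}, which uses only $\phi\ge1$. Hence $\ell_1,\ell_2 \le C r$ along the forward orbit until it hits $\mcl X$ or converges to $0$, so the orbit stays in $B_{C' r} \subset B_{r_0}$ for small $r$. The same holds for $Q_{--}$ by oddness of $\v$. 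For $Q_{+-}$ and $Q_{-+}$, the rotation symmetry $R$ used in Step 2 converts the backward orbit into a forward orbit in $Q_{++}$ or $Q_{--}$. Points that are already on $\mcl X$ need no argument.

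Finally, we conclude. Let $z\in B_r$. If its complete flow line meets $\mcl X$ at $M(t_0)$, the segment of the orbit between $z$ and $M(t_0)$ lies in $B_{r_0}$, where $g$ is conserved. Therefore $f(z) = g(M(t_0)) = g(z)$, because the value prescribed for $f$ on the flow line is exactly $g(x_0,y_0)$. If the flow line never meets $\mcl X$, then by Proposition~\ref{p.flow} it converges to $0$ forward or backward in time. By the bound above it stays in $B_{r_0}$, so continuity and conservation of $g$ give $g(z)=g(0)=0=f(z)$. The origin itself is trivial.
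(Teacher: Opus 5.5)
Your proof is correct, but it gets the key confinement step by a different route than the paper.

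The paper works entirely with the linear field $\mathbf{w}$. It diagonalizes it in the coordinates $u = \ga x + (1+\mu)y$, $v = \ga x + (1-\mu)y$, with $\mu = \sqrt{1+\ga^2}$, and identifies flow lines with branches of hyperbolas $\{uv = c\}$. It then computes the crossing point $q$ with $\mcl X$ explicitly, giving $|q|^2 = |c|/\ga^2 \le (\mu/\ga)|p|^2$. Convexity of $u^2 + c^2/u^2$ in $u^2$ then keeps the arc from $p$ to $q$ inside $B_{r_0}$. The case $g(p)=0$ is handled separately, by identifying the four half-axes of $\{uv=0\}$ as the only flow lines of $\mathbf{w}$ converging to the origin.

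You instead obtain confinement directly for the nonlinear field $\v$. You truncate at the crossing time the a priori bounds $0<\ell_1\le \ell_1(0)$ and $0<\ell_2\le \ell_2(0)+\ga^{-1}\ell_1(0)$ from the proof of Proposition~\ref{p.flow}. This is legitimate, since those bounds only use positivity of the integrands while the orbit is in $Q_{++}$. You then transport them to the other quadrants by $z\mapsto -z$ and by $z\mapsto Rz$ combined with time reversal, both of which preserve $\mcl X$ and the balls $B_r$.

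What your route buys:
\begin{itemize}
\item It needs no explicit hyperbola geometry.
\item It treats the non-crossing case uniformly: the orbit converges to $0$ while staying in $B_{r_0}$, so continuity and conservation of $g$ give $g(z)=0$. This is cleaner than the paper's identification of the stable and unstable manifolds of $\mathbf{w}$.
\end{itemize}
What the paper's route buys: it is self-contained within the linear model and gives explicit constants.

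One point you leave implicit: for $z\in Q_{++}$, the unique crossing with $\mcl X$ (if any) must occur in forward time. This follows from the Step~1 fact that backward trajectories in $Q_{++}$ never leave it, and similarly for the other quadrants. It is worth stating, since it is what makes the forward-orbit bound cover the segment between $z$ and the crossing point.
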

\begin{proof}
We decompose the proof into two steps. 

\smallskip

\noindent \emph{Step 1.} In this step, we introduce some notation and a convenient change of coordinates. 
For every $x,y \in \R$, we write $g(x,y) := \frac \ga 2 x^2 + xy - \frac \ga 2 y^2$ and
\begin{equation}  
\label{e.def.w}
\mathbf{w}(x,y) := \begin{pmatrix} \ga y - x \\ y + \ga x \end{pmatrix}.
\end{equation}
Since the quadratic forms $x^2 + axy + by^2$ and $y^2 - axy + bx^2$ are positive definite
(recall that $T^2 + aT + b$ has no real root), there exists $r_0 > 0$ such that both are
at most $\tfrac{1}{2}$ on $B_{r_0}$. Since $\phi = 1$ on $[0,\tfrac{1}{2}]$, we have
$\v = \mathbf{w}$ on $B_{r_0}$.

We set $\mu := \sqrt{1+\ga^2}$ and introduce the linear coordinates
\[
  u := \ga x + (1+\mu)y, \qquad v := \ga x + (1-\mu)y.
\]
A direct computation shows that along the flow lines of $\mathbf{w}$, we have
$\dot u = \mu\, u$ and $\dot v = -\mu\, v$,
so that $uv$ is preserved along the flow. Moreover, for every $x,y \in \R$, we have
\begin{equation}
  \label{e.g.uv}
  g(x,y) = \frac{\ga}{2}x^2 + xy - \frac{\ga}{2}y^2 = \frac{1}{2\ga}\,uv.
\end{equation}
Setting $\kappa := (\mu+1)/(\mu-1)$, one can check that, in $(u,v)$ coordinates,
\[
  \mcl X = \{u = \kappa\, v\} \cup \{u = -v\}.
\]
In particular, the set $\mcl X$ meets every open quadrant of the $(u,v)$-plane: the line
$\{u=\kappa\, v\}$ enters the quadrants where $u$ and $v$ have the same sign, while  the line $\{u=-v\}$ enters the quadrants where $u$ and $v$ have opposite signs. We also note that, since the vectors $(\ga, 1+\mu)$ and $(\ga,1-\mu)$ are orthogonal, and their squared norms are $2\mu(\mu+1)$ and $2\mu(\mu-1)$ respectively, we have
\begin{equation}  
\label{e.ortho}
x^2 + y^2 = \frac{u^2}{2\mu(\mu+1)} + \frac{v^2}{2\mu(\mu-1)}.
\end{equation}

\smallskip

\noindent \emph{Step 2.} We now show that $f = g$ on $B_r$ for a suitable $r \in (0,r_0]$.

Consider first a point $p = (x,y) \in B_r$ with $g(x,y) \neq 0$, and denote its $(u,v)$ coordinates by $(u_0,v_0)$. We define $c := u_0 v_0$, which is non-zero by~\eqref{e.g.uv}.
The flow line of $\mathbf{w}$ through the point $p$ is an arc of the hyperbola $\{uv = c\}$.
Each branch of this hyperbola lies in a single open quadrant of the $(u,v)$-plane,  and crosses $\mcl X$ exactly
once, at a point which we denote by $q$. Indeed, if for instance we suppose that the branch of hyperbola being discussed is in the quadrant $\{u > 0, v > 0\}$, then the line $\{u = - v\}$ does not intersect this quadrant, while the line $\{u = \kappa v\}$ intersects it at the point $(\sqrt{\kappa c}, \sqrt{c/\kappa})$ and only there; for the quadrant $\{u < 0, v < 0\}$, the $(u,v)$-coordinates of the point $q$ are $(-\sqrt{\kappa c}, -\sqrt{c/\kappa})$; while for $\{\pm u > 0, \mp v > 0\}$ (i.e., when $c < 0$), the $(u,v)$-coordinates of $q$ are $(\pm \sqrt{|c|}, \mp \sqrt{|c|})$. In all cases, recalling \eqref{e.ortho} and that $\mu^2 = 1+\gamma^2$, we have 
\begin{equation*}  
|q|^2 = \frac{|c|}{\gamma^2}. 
\end{equation*}
On the other hand, the symmetric matrix associated with the quadratic form $g$ has eigenvalues $\pm \mu/2$. Using \eqref{e.g.uv}, we thus obtain
\begin{equation*}  
|c| = 2 \gamma |g(p)| \le \gamma \mu |p|^2. 
\end{equation*}
Combining the two previous displays yields that
\begin{equation*}  
|q|^2 \le \frac{\mu}{\ga} |p|^2. 
\end{equation*}
It remains to verify that the arc going from $p$ to $q$ stays in the region where $\v = \mathbf w$. We note that on a branch of $\{u v = c\}$, the quantity $u^2 + v^2 = u^2 + c^2/u^2$ is convex in $u^2$; so the arc joining $p$ to $q$ remains in any ``$(u,v)$-ball'' containing both $p$ and $q$. Using again \eqref{e.ortho}, it is thus clear that we can choose $r > 0$ sufficiently small that for every $p \in B_r$ and corresponding $q \in \mathcal{X}$, the arc of hyperbola connecting $p$ and $q$ stays within $B_{r_0}$.
Since $\v = \mathbf{w}$ on $B_{r_0}$, this part of a flow line of $\mathbf w$ is also part of a flow line of $\v$,
which thus intersects $\mcl X$. By definition, $f$ is constant along this flow line and
equals $g$ at the crossing point; since $g$ is also constant along this flow line
by~\eqref{e.g.uv}, we conclude that $f(x,y) = g(x,y)$.

It remains to consider the case when $p \in \{g=0\} \cap B_r$. The equality $f(0) = g(0) = 0$ follows directly from the definition of $f$. Besides the origin, the set $\{g = 0\}$ consists of the four half-axes $\{u = 0, \pm v > 0\}$ and $\{v = 0, \pm u > 0\}$. These four half-axes are the only flow lines of $\mathbf w$ that converge to the origin (as $t \to +\infty$ for the first pair, and as $t \to - \infty$ for the second pair). The flow lines of $\mathbf{v}$ that converge to the origin as $t$ tends to $\pm \infty$ from Proposition~\ref{p.flow}
must therefore, once restricted to $B_{r}$, coincide with the lines along which $g$ vanishes. By the definition of $f$, we thus have $f(p) = 0 = g(p)$, as desired. 
\end{proof}

\begin{proposition}
\label{p.good.f}
The function $f$ is $\mcl C^\infty$ on $\R^2$, has a unique critical point at the origin, and for every $R < +\infty$, it satisfies the two-sided PL condition on $[-R,R]^2$. 
\end{proposition}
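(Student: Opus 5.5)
The proof has three parts, in this order: regularity of $f$, absence of critical points away from the origin, and the implications \eqref{e.impl1}--\eqref{e.impl2}. The last part then gives the two-sided PL condition on every $[-R,R]^2$ by Proposition~\ref{p.crit.2PL}.

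\emph{Regularity.} Denote by $\Phi_t$ the (smooth, local) flow of $\v$. Then $f \circ \Phi_t = f$ wherever defined. I will show that $f$ is smooth near every point $p$, using the trichotomy of Proposition~\ref{p.flow}.
\begin{itemize}
\item If the flow line through $p$ converges to the origin as $t\to\pm\infty$, or if $p=0$, pick $T$ with $\Phi_T(p) \in B_r$, where $r$ is the radius of Proposition~\ref{p.near.origin}. By continuity of the flow, $\Phi_T$ maps a neighbourhood of $p$ into $B_r$. There $f = g\circ \Phi_T$ with $g(x,y)=\frac\ga2x^2+xy-\frac\ga2y^2$, which is smooth.
\item If the flow line through $p$ meets $\mcl X$ at a point $q = \Phi_{t_0}(p) \neq 0$, then the flow is transverse to $\mcl X$ at $q$. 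Indeed, Step~1 of the proof of Proposition~\ref{p.flow} gives $\dot\ell_1 \neq 0$ on $\{\ell_1=0\}\setminus\{0\}$, and similarly for $\ell_2$. The implicit function theorem applied to $(p',t)\mapsto \ell_i(\Phi_t(p'))$ then yields a smooth hitting time $\tau(p')$ near $p$. By the uniqueness in Proposition~\ref{p.flow}, this is the unique crossing time, so $f(p') = h(\Phi_{\tau(p')}(p'))$, where $h$ is the quadratic function \eqref{e.f.on.mclX} on the relevant line. This expression is smooth.
\end{itemize}
Since $\Phi$ is defined on an open set containing $(p,T)$ or $(p,t_0)$, these local formulas cover $\R^2$.

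\emph{Critical points.} Differentiating $f = f\circ\Phi_t$ gives $\nabla f(p) = (D\Phi_t(p))^{\mathsf T}\nabla f(\Phi_t(p))$, and $D\Phi_t$ is invertible. Hence it suffices to check that $\nabla f \neq 0$ on $B_r\setminus\{0\}$ and on $\mcl X\setminus\{0\}$. On $B_r$, $f=g$ is a nondegenerate quadratic form. On $\{x=\ga y\}$ the vector $\v$ is vertical and $f$ is constant along flow lines, so $\dr_y f=0$ there. Differentiating \eqref{e.f.on.mclX} along the direction $(\ga,1)$ then gives $\ga\,\dr_x f(\ga y,y) = (\ga^3+\ga)y \neq 0$ for $y\neq 0$. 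The line $\{y=-\ga x\}$ is handled the same way. This also justifies \eqref{e.orange.lines}.

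\emph{Second-derivative condition.} At the origin, $\dr_x^2 f = \ga>0$ and $\dr_y^2 f=-\ga<0$ by Proposition~\ref{p.near.origin}. By \eqref{e.orange.lines}, the remaining points are $q=(x_0,-\ga x_0)$ with $x_0\neq0$ for \eqref{e.impl1}, and $(\ga y_0,y_0)$ with $y_0\ne0$ for \eqref{e.impl2}.

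Take such a $q$. Since $\dr_y f(q)\neq0$, near $q$ the level line is a graph $y=Y(x)$ with $Y'(x_0)=0$. Differentiating $\dr_x f + \dr_y f\, Y' = 0$ once more at $q$ gives
\[
\dr_x^2 f(q) = -\dr_y f(q)\, Y''(x_0).
\]
The level line is a flow line of $\v$, so $Y' = \v_2/\v_1$. At a point where $\v_2=0$ this yields
\[
Y'' = \frac{\dr_x\v_2}{\v_1} = \frac{\ga\,\phi_2}{\v_1},
\]
because the term involving $\phi'$ carries the factor $y+\ga x=0$. Here $\v_1(q) = -(1+\ga^2)x_0\,\phi_1$.

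For $\dr_y f(q)$, differentiate \eqref{e.f.on.mclX} along the direction $(1,-\ga)$ and use $\dr_x f(q)=0$. This gives $-\ga\,\dr_y f(q) = -(\ga^3+\ga)x_0$, so $\dr_y f(q) = (1+\ga^2)x_0$. Combining,
\[
\dr_x^2 f(q) = -(1+\ga^2)x_0\cdot\frac{\ga\phi_2}{-(1+\ga^2)x_0\phi_1} = \frac{\ga\phi_2}{\phi_1}>0,
\]
which is \eqref{e.impl1}. The symmetric computation on $\{x=\ga y\}$ gives \eqref{e.impl2}.

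I expect this sign bookkeeping to be the main obstacle, since the value prescribed in \eqref{e.f.on.mclX} has to be compatible with the orientation of $\v$ for the signs to come out right. As a check, the near-origin case must reproduce $\ga\phi_2/\phi_1 = \ga = \dr_x^2 g$, which it does.
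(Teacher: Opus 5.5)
Your proposal is correct and follows essentially the same route as the paper: smoothness and non-vanishing of $\nabla f$ come from transporting along the flow of $\v$ to $B_r$ or to a transversal crossing with $\mcl X\setminus\{0\}$ (your implicit-function hitting time plays the role of the paper's flow-box coordinates), and then pulling back through $D\Phi_t$. The sign of $\dr_x^2 f$ on $\{y=-\ga x\}$ comes from the identity $\dr_x^2 f\,\v_1+\dr_y f\,\dr_x\v_2=0$, which you derive via the graph of the level line and make explicit as $\ga\phi_2/\phi_1$, where the paper only reads off the signs.
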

\begin{proof}
By Proposition~\ref{p.near.origin}, there exists $r > 0$ such that the identity
\eqref{e.near.origin} holds for every $(x,y) \in B_r$. In particular, the restriction
of $f$ to $B_r$ is $\mcl C^\infty$; moreover, since the quadratic form on the right
side of \eqref{e.near.origin} is non-degenerate, the origin is the only point of $B_r$
at which the gradient of $f$ vanishes.

We next show that $f$ is $\mcl C^\infty$ with non-vanishing gradient in a neighborhood
of every point $z_0 \in \mcl X \setminus \{0\}$. The idea is to use the flow of $\v$ to
build local coordinates in which one coordinate moves along the flow lines, so that $f$
depends only on the other, transverse coordinate. For definiteness, we assume that
$z_0 = (x_0, -\ga x_0)$ with $x_0 \neq 0$, the case of the other line making up
$\mcl X$ being similar. We parametrize the line $\{y = -\ga x\}$ by
$\si(s) := z_0 + s (1+\ga^2)^{-1/2}(1,-\ga)$, and we denote by $(\Phi_t)$ the flow of
the vector field $\v$, so that $t \mapsto \Phi_t(z)$ is the flow line of $\v$ started
at $z$. Since $\v$ is $\mcl C^\infty$, the mapping $\Phi : (s,t) \mapsto \Phi_t(\si(s))$
is well-defined and $\mcl C^\infty$ in a neighborhood of $(0,0)$, and its differential
at $(0,0)$ maps the canonical basis of $\R^2$ to the pair of vectors
$(\si'(0), \v(z_0))$. On the line $\{y = -\ga x\}$, the coordinate $\v_2$ vanishes,
while $\v_1$ vanishes only at the origin, since the two lines making up $\mcl X$
intersect only there. The vector $\v(z_0)$ is therefore horizontal and non-zero, while
$\si'(0)$ is not horizontal, so the differential of $\Phi$ at $(0,0)$ is invertible. By
the inverse function theorem, the mapping $\Phi$ is a $\mcl C^\infty$ diffeomorphism
from a neighborhood of $(0,0)$ onto a neighborhood $V$ of $z_0$; we write
$z \mapsto (s(z), t(z))$ for its inverse. Every $z \in V$ belongs to the flow line of
$\v$ started at $\si(s(z))$, and by Proposition~\ref{p.flow}, this flow line intersects
$\mcl X$ only at $\si(s(z))$. Recalling that $f$ was defined to be constant along each
complete flow line, and using \eqref{e.f.on.mclX}, we obtain that for every $z \in V$,
\begin{equation}
\label{e.flowbox}
f(z) = f(\si(s(z))) = -\frac 1 2 (\ga^3+\ga) \Ll( x_0 + \frac{s(z)}{\sqrt{1+\ga^2}} \Rr)^2 .
\end{equation}
In the coordinates $(s,t)$, the function $f$ is thus constant in the flow
coordinate~$t$, and polynomial in the transverse coordinate $s$. Since
$z \mapsto s(z)$ is $\mcl C^\infty$ on $V$, the identity \eqref{e.flowbox} shows that
$f$ is $\mcl C^\infty$ on $V$. Taking gradients in \eqref{e.flowbox} and using that
$x_0 \neq 0$ and that $\nabla s$ never vanishes (the differential of $\Phi^{-1}$ being
invertible), we also see, after possibly shrinking $V$, that the gradient of $f$ does
not vanish on $V$.

Finally, let $z \in \R^2 \setminus \{0\}$ be arbitrary. By Proposition~\ref{p.flow},
the complete flow line of $\v$ passing through $z$ either intersects $\mcl X$, or
converges to the origin as $t \to +\infty$ or as $t \to -\infty$. In every case, there
exists $t_1 \in \R$ such that $z' := \Phi_{t_1}(z)$ belongs to
$(\mcl X \cup B_r) \setminus \{0\}$; notice that $z'$ cannot be the origin, since by
uniqueness of solutions, the only flow line passing through the origin is the constant
one. The mapping $\Phi_{t_1}$ is well-defined and $\mcl C^\infty$ on a neighborhood $U$
of $z$, and is a diffeomorphism onto its image, with inverse $\Phi_{-t_1}$. For every
$w \in U$, the points $w$ and $\Phi_{t_1}(w)$ lie on the same complete flow line, and
$f$ is constant along complete flow lines, so
\begin{equation*}
f = f \circ \Phi_{t_1} \qquad \text{on } U.
\end{equation*}
By the previous two paragraphs, the function $f$ is $\mcl C^\infty$ in a neighborhood
of~$z'$, so $f$ is also $\mcl C^\infty$ in a neighborhood of $z$. Taking gradients, we
get $\nabla f(z) = (D \Phi_{t_1}(z))^\mathsf{T} \, \nabla f(z')$, where
$D\Phi_{t_1}(z)$ is the (invertible) Jacobian matrix of $\Phi_{t_1}$ at $z$, and
$\nabla f(z') \neq 0$; hence $\nabla f(z) \neq 0$.

It remains to show that for every $R < +\infty$, the function $f$ satisfies the two-sided PL condition on $[-R,R]^2$. By Proposition~\ref{p.crit.2PL}, it suffices to verify that \eqref{e.impl1} and \eqref{e.impl2} are valid. These conditions are valid at the origin, by Proposition~\ref{p.near.origin}. Let $z = (x,y) \in \R^2 \setminus \{0\}$ be a point such that $\dr_x f(z) = 0$. Since $f$ has no critical point besides the origin, we must have that $\dr_y f(z) \neq 0$. Since $\nabla f$ must be orthogonal to $\v$, it follows that $\v_2(z) = 0$, that is, $y + \ga x = 0$ (and in particular $z \in \mcl X$). For definiteness, let us assume that $x > 0$. Using that $\dr_x f(z) = 0$ and \eqref{e.f.on.mclX}, we see that $\dr_y f(z) > 0$. Differentiating in $x$ the relation $\nabla f \cdot \v = 0$ and using that $\dr_x f(z) = 0$ and $\v_2(z) = 0$, we find that
\begin{equation}  
\label{e.id.dx2}
\Ll(\dr_x^2 f \, \v_1 + \dr_y f \, \dr_x \v_2\Rr)(z) = 0.
\end{equation}
Since $\v_2(x',y')$ is of the form $(y'+\ga x')$ times some function taking only positive values, we have that $\dr_x \v_2(z) > 0$. Since $x > 0$ and $y < 0$, we have $\v_1(x,y) < 0$. Recalling also that $\dr_y f(z) > 0$, we conclude from \eqref{e.id.dx2} that $\dr_x^2 f(z)$ must be strictly positive, as desired. The other cases can be handled similarly.
\end{proof}

We are now ready to prove the theorem.

\begin{proof}[Proof of Theorem~\ref{t.main}]
Up to a change of scale, we may as well show the statement of Theorem~\ref{t.main} with the domain $[-1,1]^2$ replaced by $[-R,R]^2$, for some $R < +\infty$ of our choosing. 
For $R < +\infty$ sufficiently large, we have for every $(x,y) \in \R^2 \setminus B_R$ that 
\begin{equation}  
\label{e.v.outside}
\v(x,y) = 
2 \begin{pmatrix}  
-y^3 P(x/y) \\
-x^3 P(-y/x)
\end{pmatrix}
= 
2 \begin{pmatrix}  
-x^3 - x^2 y - x y^2 + \frac{y^3}{3} \\
y^3 - x y^2 + x^2 y + \frac{x^3}{3}
\end{pmatrix}.
\end{equation}
For every $x,y \in \R$, we define
\begin{equation*}  
E(x,y) := 3 x^4 + 4 x^3 y + 6 x^2 y^2 - 4 x y^3 + 3 y^4.
\end{equation*}
On $\R^2 \setminus B_R$, we have
\begin{equation}  
\label{e.plot.twist}
\begin{pmatrix}  
\dr_x E \\
\dr_y E
\end{pmatrix}
= 6
\begin{pmatrix}  
-\v_1\\
\v_2
\end{pmatrix}.
\end{equation}
Let $(z(t))_{t \in I}$ be a GDA trajectory that stays in $\R^2 \setminus B_R$. By the definition in \eqref{e.def.grad.asc.desc}, we have
\begin{equation*}  
\dr_t (E(z(t))) =
\begin{pmatrix}  
\dr_x E \\
\dr_y E
\end{pmatrix}
\cdot
\begin{pmatrix}  
-\dr_x f \\
\dr_y f
\end{pmatrix}
(z(t)).
\end{equation*}
Using \eqref{e.plot.twist} and recalling that $\nabla f$ and $\v$ are orthogonal, we see that $E$ remains constant along the trajectory of $(z(t))_{t \in I}$. We now observe that every level line of $E$ is the boundary of an $L^4$ ball rotated by $\pi/8$. Indeed, this follows from the observation that, for $\al := \tan(\pi/8) = \sqrt{2} - 1$, we have
\begin{align*}  
& |x + \al y|^4 + |y - \al x|^4
\\
& \quad = (1+\al^4) x^4 + 4\al(1-\al^2) x^3 y + 12 \al^2 x^2 y^2 - 4 \al (1-\al^2)x y^3  + (1+\al^4) y^4
\\
& \quad = 2\al^2 E(x,y).
\end{align*}
It is therefore clear that the level lines of $E$ are closed curves that circle around the origin, and that some of those stay outside of $B_R$. We choose $R'$ such that $[-R',R']^2 \setminus B_R$ contains such level lines in its interior. A GDA flow starting from a point on one of these level lines will forever circle along it, since $E$ must remain constant along the GDA flow and the function $f$ has no critical point along this level line, by Proposition~\ref{p.good.f}. By this same proposition, the restriction of the function $f$ to $[-R',R']^2$ thus satisfies all the requirements to yield the validity of Theorem~\ref{t.main}.
\end{proof}

%
%
%
%
%
%

\section{From two-sided PL to strongly-convex/PL}
\label{s.convPL}

In this section, we prove Corollary~\ref{c.convPL}. 
We start by discussing the idea underlying the argument. The key observation is that, in order to verify that the function $f$ indeed satisfies all the properties announced in Theorem~\ref{t.main}, the knowledge of the level sets of $f$ is essentially sufficient. Hence, in order to prove Corollary~\ref{c.convPL}, we will look for a function that has the exact same level lines as $f$ (as displayed on Figure~\ref{f.vector_field}), but we will seek a new assignment of values to these level lines, hoping to speed up the growth of the function~$f$ towards large values so that it becomes convex in $x$. Concretely, this means that we build the function $F$ for Corollary~\ref{c.convPL} as $\Psi \circ f$, where $\Psi$ is an increasing function whose derivative grows sufficiently rapidly.

\begin{proof}[Proof of Corollary~\ref{c.convPL}]  
Let $f : \R^2 \to \R$ denote the function that was built in Section~\ref{s.constr}. For some $\be > 0$ to be determined, we define $F_\be : \R^2 \to \R$ by
\begin{equation*}  
F_\be := \be^{-1} e^{\be f}. 
\end{equation*}
The function $F_\be$ is $\mcl C^\infty$, with
\begin{equation}  
\label{e.dr1}
\nabla F_\be = \nabla f \,  e^{\be f}, 
\end{equation}
\begin{equation}  
\label{e.dr2}
\dr_x^2 F_\be = \Ll(\dr_x^2 f + \be (\dr_x f)^2\Rr) e^{\be f},
\end{equation}
and similarly with $x$ substituted by $y$. In particular, the function $F_\be$ has the same unique critical point as $f$. We recall from the proof of Proposition~\ref{p.good.f} that the function $f$ satisfies the implications \eqref{e.impl1} and \eqref{e.impl2}. From \eqref{e.dr1}-\eqref{e.dr2}, we see that the function $F_\be$ also satisfies these two implications. By Proposition~\ref{p.crit.2PL}, the function $F_\be$ thus satisfies the two-sided PL condition on every compact set. 

Recall that in the last paragraph of the previous section, we chose $R'$ sufficiently large to contain an open set of periodic orbits for the GDA flow on $f$; from now on, we fix $I := [-R',R']$ with this choice of $R'$. We now verify that, by choosing $\be$ sufficiently large, we can make sure that $F_\be$ is strongly convex in $x$ over~$I^2$. We let $\mcl Z$ denote the set defined in \eqref{e.def.mclZ}. Arguing as in the paragraph below~\eqref{e.def.mclZ}, we can find an open neighborhood $U$ of $\mcl Z$ and a constant $c > 0$ such that $\dr_x^2 f \ge c$ over $U$. By definition of $\mcl Z$, we have that $\dr_x f$ does not vanish in $I^2 \setminus U$, and by compactness, we thus have that 
\begin{equation*}  
c' := \inf_{I^2 \setminus U} (\dr_x f)^2 > 0.
\end{equation*}
Using that $f$ is a $\mcl C^2$ function, we can choose $\be$ sufficiently large that for every $z \in I^2$, we have
\begin{equation}
\label{e.}
\dr_x^2 f(z) + \be c'\ge 1.
\end{equation}
Letting
\begin{equation*}  
m_\be := \inf_{I^2} e^{\be f} > 0,
\end{equation*}
we thus have that
\begin{equation*}  
z \in U \quad \implies \quad \dr_x^2 F_\be(z) \ge c \, m_\be,
\end{equation*}
and
\begin{equation*}  
z \in I^2 \setminus U \quad \implies \quad \dr_x^2 F_\be(z) \ge m_\be,
\end{equation*}
as desired. 

It remains to verify that the trajectories of the GDA flow for $F_\be$ coincide with those of the GDA flow for $f$, up to a time reparametrization. This is immediate from \eqref{e.dr1}, since
\begin{equation*}  
(-\dr_x F_\be, \dr_y F_\be) = e^{\be f} (-\dr_x f, \dr_y f). 
\end{equation*}
All the desired properties of $F_\be$ are therefore verified on $I^2 = [-R',R']^2$. A change of scale completes the proof of Corollary~\ref{c.convPL}.
\end{proof}

\medskip

\noindent \textbf{Acknowledgements.} I would like to warmly thank Loucas Pillaud-Vivien for introducing me to this problem and for helpful feedback. I acknowledge the support of the ERC MSCA grant SLOHD (101203974), and of the French National Research Agency (ANR) under the France 2030 grant ANR-24-RRII-0002 operated by the Inria Quadrant Program. 

\small
\bibliographystyle{plain}
\bibliography{minmax}

\end{document}